\patchcmd{\@maketitle}{\LARGE \@title}{\LARGE\bfseries\@title}{}{}
\renewcommand{\@seccntformat}[1]{\csname the#1\endcsname.\quad}
\definecolor{darkblue}{rgb}{0,0,.5}
\def\th@plain{%
	\thm@notefont{}
	\itshape 
}
\def\th@definition{%
	\thm@notefont{}
	\normalfont 
}
\renewenvironment{proof}[1][\proofname]{\par
	\normalfont
	\topsep0\p@\@plus3\p@ \trivlist
	\item[\hskip\labelsep\itshape
	#1\@addpunct{.}]\ignorespaces
}{%
	\qed\endtrivlist
}
\newtheorem{theorem}{Theorem}[section]
\newtheorem{lemma}[theorem]{Lemma}
\newtheorem{proposition}[theorem]{Proposition}
\theoremstyle{definition}
\newtheorem{definition}[theorem]{Definition}
\newtheorem{example}[theorem]{Example}
\newtheorem{remark}[theorem]{Remark}
\definecolor{myblue}{rgb}{.8, .8, 1}
\newcommand{\vp}{\varepsilon}
\newcommand{\Rb}{\R\cup \{+\infty\}}
\newcommand{\ball}{\B}
\def\F{\mathcal{F}}  
\def\<{\langle}
\def\>{\rangle}
\def\vol{\textnormal{Vol}\,}
\newcommand{\ballc}{\overline{\ball}}
\def\inti{\textnormal{int}\,}
\def\cl{\textnormal{cl}\,}
\def\epi{\textnormal{epi}\,}
\def\gph{\textnormal{gph}\,}
\def\dom{\textnormal{Dom}\,}
\def\proj{\textnormal{proj}\,}
\newcommand{\E}{\mathcal E}
\newcommand{\R}{\mathbb R}
\newcommand{\N}{\mathbb N}
\newcommand{\B}{\mathbb B}
\def\dom{\mathop{\rm dom\,}}
\def\clco{\mathop{\rm clco\,}}
\def\cl{\mathop{\rm cl\,}}
\def\argmin{\mathop{\rm argmin\,}}
\begin{document}

\title{Geometric Stability Analysis for Differential Inclusions Governed by Maximally Monotone Operators\thanks{The research of HS \& MT has been partially supported by Gulf University for Science and Technology and the Research Center (CAMB) under project code: ISG – Case No. 160. }
\thanks{The research of MND was partially supported by the Australian Research Council Discovery Project DP230101749.}}

\author{
Hassan Saoud\thanks{Department of Mathematics and Natural Sciences \& Center of Applied Mathematics and Bioinformatics (CAMB), Gulf University for Science and Technology, P.O. Box 7207, Hawally 32093, Kuwait. Email: \texttt{saoud.h@gust.edu.kw}.}, 
~
Michel Th\'era\thanks{Laboratoire XLIM UMR-CNRS, 7252, Universit\'e de Limoges, 87032, Limoges, France. Email: \texttt{michel.thera@unilim.fr},}
~~and~
Minh N. Dao\thanks{School of Sciences, RMIT University, Melbourne, VIC 3000, Australia. Email: \texttt{minh.dao@rmit.edu.au}.}
}

\date{}

\maketitle

\abstract{This paper develops a geometric framework for the stability analysis of differential inclusions governed by maximally monotone operators. A key structural decomposition expresses the operator as the sum of a convexified limit mapping and a normal cone. However, the resulting dynamics are often difficult to analyze directly due to the absence of Lipschitz selections and boundedness. To overcome these challenges, we introduce a regularized system based on a fixed Lipschitz approximation of the convexified mapping. From this approximation, we extract a single-valued Lipschitz selection that preserves the essential geometric features of the original system. This framework enables the application of nonsmooth Lyapunov methods and Hamiltonian-based stability criteria. Instead of approximating trajectories, we focus on analyzing a simplified system that faithfully reflects the structure of the original dynamics. Several examples are provided to illustrate the method's practicality and scope. The analysis is carried out under a uniform boundedness assumption on the convexified limit mapping.}


{\small
\noindent{\bfseries Keywords:} maximally monotone operator, 
nonsmooth dynamical systems, 
pointwise asymptotic stability, 
semistability, 
stability of sets.

\noindent{\bfseries AMS Subject Classifications:} 34A60, 49J52; 47H05, 47J35, 49J53, 37B25
}

\section{Introduction}\label{sec1}

Lyapunov functions are essential for analyzing differential equations, especially in stability theory. Identifying  Lyapunov functions is crucial for both theoretical and practical applications.
The work focuses on the stability of a differential inclusion involving a maximally monotone operator. Let $A: \R^n \rightrightarrows \R^n$  be a maximally  monotone operator. Consider a locally Lipschitz  function $f$ defined on  $\cl(\dom A)$, the closure of $\dom A$. Given $x_0 \in \cl(\dom A)$, we consider the following dynamical system 
\begin{align}\label{eq:prob} 
\left\{\begin{array}{ll}\dot{x}(t) \in f(x(t)) - A(x(t)) 
& \textrm{ a.e. }\,\, t\in [0,+\infty) \\
x(0)=x_0 \in \cl(\dom A).
\end{array}\right.
\tag{$\mathcal{DI}$}
\end{align}
It is well-known that the system \eqref{eq:prob} admits a unique solution for all $t\geq0$, see for example \cite{barbu, Brezis1973}. This paper aims to establish the stability of sets of the problem \eqref{eq:prob},
with a particular focus on \emph{pointwise asymptotic stability of a set $(\mathbf{PAS})$.}
Pointwise asymptotic stability means that every point in the set that is an equilibrium is Lyapunov stable, and every solution starting near the set converges and ends up within the set.
To clarify the notion of pointwise asymptotic stability of a set, it is essential to note that in the literature, $\mathbf{PAS}$  is typically defined with respect to the set of equilibria (see \cite{goebel10, goebel16, goebel18}). Some references  use the term \emph{semistability} to describe $\mathbf{PAS}$  in relation to the set of equilibria (see, for example, \cite{bhat99, bhat03, hui09, hui10}). In this paper, we use the term $\mathbf{PAS}$  when discussing an arbitrary set, while we use the term semistability when focusing on the set of equilibria. Therefore, an equilibrium is semistable if it is Lyapunov stable, and every solution starting in a neighborhood of the equilibrium converges to a (possibly different) Lyapunov stable equilibrium. It is worth noting that semistability does not mean that  the set of equilibria is  asymptotically stable. In fact, a trajectory can converge to the equilibria set without converging to any specific equilibrium point. Semistability, however,  does not automatically mean that the equilibrium set is asymptotically stable in a straightforward manner. This arises because stability of sets  is defined with respect to distance, especially when dealing with noncompact sets, which is the case for the system \eqref{eq:prob}. Therefore, semistability and  set stability of the equilibrium set are two separate concepts. In the case  where the set of equilibrium is a singleton, then the semistability is equivalent to the stability of this set. This stability concept is suitable for the cases involving nonisolated equilibria and it has been examined within the framework of both differential equations \cite{bhat99, bhat03},  and differential inclusions \cite{hui09, hui10}. Moreover, in \cite{bhat10}, the authors provide sufficient conditions for semistability  using arc-length-based Lyapunov criteria. Studies in \cite{goebel10, goebel16, goebel18} extensively analyse  semistability for both hybrid systems and difference inclusions, providing sufficient conditions  in terms of set-valued Lyapunov functions. In \cite{saoud15}, the notion of semistability was extended to differential inclusions, where the operator $A$ is the subdifferential of a proper lower semicontinuous convex function. The results are expressed in terms of continuously differentiable Lyapunov functions.

As previously mentioned, the aim of this paper is to investigate the pointwise asymptotic stability of a set, and consequently, the semistability (of the set of equilibria). All the results are given based on \emph{Lyapunov pairs} approach  associated to the differential inclusion \eqref{eq:prob} and the \emph{lower Hamiltonian} corresponding to the set-valued mapping $f-A$. We will demonstrate our capability to identify a set $S$ depending on  dynamics and  Lyapunov pairs ensuring the system $(S, f-A)$ is invariant. This set is paramount to prove the $\mathbf{PAS}$  and the semistability of the set of equilibria as well. It is important to note that we are not providing a detailed characterization of invariant sets in this context. If $x(\cdot)$ is solves \eqref{eq:prob} starting at $x_0 \in \cl(\dom A)$ and $V, W$  are lower semicontinuous extended real-valued functions, then $(V,W)$ is called \emph{Lyapunov pair} for \eqref{eq:prob} if the function below
is decreasing along the solution of \eqref{eq:prob}: 
\[
t \mapsto V(x(t)) + \displaystyle{\int_0^t W(x(\tau))\,d\tau}.
\]
Hence, if $W\equiv 0$ we say that the function $V$ is a Lyapunov function of \eqref{eq:prob}. 

Extensive research in recent decades has explored invariant sets via Lyapunov pairs. In \cite{ clarke1995,clarke1997approximate,clarke2}, the authors studied the classical case of  differential inclusions of the form 
\[
\dot{x}(t) \in F(x(t)). 
\]   
Here, the set-valued mapping $F$ is a CUSCO (convex upper semicontinuous, nonempty and compact valued), and  it is further  assumed to satisfy a certain linear growth condition. For a closed set $S$, establishing  the invariance of the system $(S,F)$, the  authors in \cite{ clarke1995,clarke2} introduce a proximal criterion. This criterion is  given in terms of the lower Hamiltonian corresponding to $F$ using  an \emph{Euler solution} of the inclusion and it requires that the set-valued mapping $F$ must be locally Lipschitz.  Furthermore,  in \cite{donchev2005}, the authors extended these  invariance results to cover cases involving  one-side Lipschitz time-dependent set-valued mappings, which are less restrictive compared to Lipschitz set-valued mapping. In \cite{clarke1997approximate} and under the same assumptions on $F$, the authors provide necessary and sufficient conditions for a subset $S$ to be approximately invariant with respect to  approximate solutions of the differential inclusion. This concept of approximate invariance generalizes the classical invariance concept and it is based on the concept of  $\varepsilon-$trajectory corresponding the differential inclusion.

The initial and classical characterization of the Lyapunov pairs for differential inclusions of type \eqref{eq:prob}  was presented by Pazy in \cite{pazy1981, pazy1981lyapunov}, who considered the system 
 \[
 \dot{x}(t) \in -A(x(t)), 
 \]   
providing criteria in terms of directional-like derivatives using the Moreau-Yosida approximation of the operator $A$. 
Pazy's results were extended to  system \eqref{eq:prob} by \cite{kocan2002} and
\cite{carjua2009}. The characterization of Lyapunov pairs 
is given in \cite{kocan2002} 
through the associated Hamilton-Jacobi partial differential equations, with solutions interpreted in the viscosity sense. Their method also establishes a new adequate condition for Lyapunov pairs, generalizing the results in \cite{pazy1981, pazy1981lyapunov}. The results given in \cite{carjua2009}  offer a distinct and more explicit characterization of Lyapunov pairs  for \eqref{eq:prob} without relying on viscosity solutions. Their approach  is achieved through the contingent derivative associated with the operator. The proof utilizes tangency and flow-invariance arguments, complemented by a-priori estimates and approximation techniques. Note that in both \cite{kocan2002, carjua2009} the results are based on implicit criteria that are significantly dependent on the semi-group generated by the maximally monotone operator $A$. 

Given that the operator $A$ is not explicitly known and considering that all previous results are based on determining its associated semi-group, 
more and better conditions that are independent of this semi-group are needed. 
Following \cite{clarke2,clarke1997approximate,clarke1995}, the authors in \cite{adly2018invariant, Adly1} offer alternative criteria for characterizing invariant sets  under the differential inclusion \eqref{eq:prob}  by characterizing the Lyapunov pairs. This approach relies solely on the data $A$ and $f$, eliminating the necessity to explicitly solve the equation. Contrary to the classical differential inclusion case,  the right-hand side in \eqref{eq:prob} might be empty, non-compact, or unbounded and potentially not outer semicontinuous. Boundedness of the operator $A$ (or the boundedness of the minimal norm mapping $A^\circ$) and on the approximate invariance technique introduced in \cite{clarke1997approximate} are at the base of those criteria.

To overcome the difficulties posed by unboundedness of the right-hand side of \eqref{eq:prob} and to avoid complex assumptions on $A$, this paper relies on 
properties of the maximally monotone operator $A$, the inclusion \eqref{eq:prob} and its solution. The approach is guided by two main ideas. First, taking advantage of the operator properties on both the interior and the boundary of its domain, we will split the operator $A$ into the sum of two set-valued mappings: one that is continuous and the other that represents the normal cone (in the sense of convex analysis) to the closure of its domain, such that 
\(
A = F + N_{\cl(\dom A)}.
\)
Therefore, problem \eqref{eq:prob} can be equivalently expressed as 
\[ 
\left\{\begin{array}{ll}\dot{x}(t) \in f(x(t)) - F(x(t))-N_C(x(t)) 
& \textrm{ a.e. }\,\, t\in [0,+\infty), \\
x(0)=x_0 \in C:=\cl(\dom A).
\end{array}\right.
\]
Second, we extend the main results of \cite{Thibault2013}, established for an $r$-prox-regular set $C$. Specially, at the solution $x(t)$ of \eqref{eq:prob}, for $v(t) \in F(x(t))$ and for $\eta(t) := -\dot{x}(t) + f(x(t)) - v(t) \in N_C (x(t))$, the estimation 
\(
\Vert \eta(t) \Vert \leq \Vert f(x(t)) - v(t) \Vert 
\)
plays an important role in the proof of our main results. The proposed method depends on choosing an appropriate selection from the set-valued mapping $F(x)$, preferably Lipschitz continuous. Unlike traditional regularization techniques that approximate the solution trajectory of a nonsmooth system, the present approach is structural in nature. Rather than recovering the original solution \( x(t) \) through limiting procedures, we replace the nonsmooth operator \( F \) with a fixed Lipschitz continuous approximation \( F_k \) that contains \(F \) and retains key geometric features of the original operator. From this approximation, we extract a single-valued Lipschitz selection \( \psi_k \), and perform the stability analysis on the resulting regularized system. The resulting robust Lyapunov-based analysis does not require convergence in \( k \) or reconstruction of original trajectories, but remains compatible with classical tools from nonsmooth analysis. 

The role of the regularized systems is purely auxiliary. The stability notions studied in this paper -pointwise asymptotic stability and semistability- are geometric, characterized through Hamiltonian inequalities and Lyapunov-type estimates involving admissible directions and distance functions, rather than through the approximation of individual trajectories. The Lipschitz outer approximations and single-valued selections are introduced solely as technical tools to establish intermediate inequalities that, once verified, result in stability properties for the original differential inclusion.

\textbf{Contribution and novelty.}
Unlike classical sweeping process frameworks, where the normal cone term constitutes the primary source of nonsmoothness, the present work addresses differential inclusions governed by general maximally monotone operators. The normal cone $N_{\cl(\dom A)}$ arises only through a structural decomposition of $A$, while the main analytical difficulty lies in the treatment of the convexified limit mapping $F = \clco A_0$. The stability and semistability results are therefore obtained at the level of Hamiltonian inequalities associated with this decomposition, using fixed Lipschitz outer regularizations solely as auxiliary analytical tools.

As mentioned earlier, the goal of this paper is to examine the $\mathbf{PAS}$  and the semistability of the dynamic \eqref{eq:prob}. All conditions will be presented in terms of the lower Hamiltonian corresponding to the set-valued mapping $f-F-N_C$ via nonsmooth Lyapunov pairs $(V,W)$ and will involve nonsmooth analysis tools and techniques. In fact, since we are focused on the set convergence and, more specifically, with the distance function, we provide a geometric approach based on proximal analysis and its differentiability properties. We apply techniques similar to those used in \cite{radulescu1997geometric} for approximating \emph{horizontal} normals to the epigraph of the lower semicontinuous function $V$ with \emph{non-horizontal} ones. Contrary to \cite{adly2018invariant,kocan2002}, the technical condition 
\(
\forall x\in \dom V,\quad V(x) = \liminf_{y \xrightarrow{C}x}{V(y)}
\)
is no longer needed, which means our result is based on minimal assumptions on the function $V$. The analysis is conducted under a uniform boundedness assumption on the convexified limit mapping, which is essential for constructing Lipschitz regularizations with controlled constants and for avoiding additional regularity assumptions on the dynamics.

The work is organized as follows. Section~\ref{sec:notation} introduces the notation and preliminary concepts from nonsmooth analysis and monotone operator theory. Section~\ref{sec:ingredients} presents the analytical ingredients needed for the proofs of the main results. In particular,
Subsection~\ref{sec:maxmonotone} analyzes the structure of maximally monotone operators and introduces the geometric decomposition that underlies the proposed framework. Subsection~\ref{sec:selection} is devoted to the construction of a Lipschitz continuous selection associated with the convexified limit mapping. Subsection~\ref{sec:stability-framework} develops the stability framework and Hamiltonian analysis. It introduces the relevant stability notions, establishes the Hamiltonian inequalities, and presents the approximation tools required for the proofs. Section~\ref{sec:proofs-main-results} contains the proofs of the main results stated in Section~\ref{sec:main-results}. Section~\ref{application} illustrates the theory through several applications, including smooth inertial systems and nonsmooth differential inclusions, showing how the assumptions and Hamiltonian conditions can be verified in concrete settings.
Section~\ref{sec:conclusion} concludes the paper. The appendix provides a detailed analysis of the Lipschitz approximation procedure used in Example~\ref{ex:approx} and collects additional technical arguments supporting the main results.
\section{Notation and Preliminaries}
\label{sec:notation}
Throughout, $\R^n$  is the $n$ dimensional Euclidean space with inner product  
$\langle\cdot,\cdot\rangle$ and induced norm $\Vert \cdot\Vert$,  i.e., for all $x \in \R^n$, {$\Vert x\Vert := \sqrt{\langle x,x\rangle}$}. We denote by $\B(x,r)$ (respectively, $\ballc (x,r)$)  the open (respectively, the closed) ball in $\R^n$ with center $x$ and radius $r$, and we set $\B:=\B(0,1)$ for the unit ball. We denote by \(\mathbb{S}^{n-1}\) the unit sphere in \(\R^n\). The \emph{indicator function} of $S$ is the function $I_S$ taking the values $0$ on $S$ and $+\infty$ off $S$.

Let $\varphi: \R^n \to \R\cup\{+\infty\}$ be an extended-real-valued function. The \emph{(effective) domain}, the \emph{epigraph} and the \emph{lower level set} of $\varphi$ are defined by  
\[
\dom \varphi:=\{x\in \R^n:\, \varphi(x)<+\infty\},\,\, \epi\varphi:=\{(x,\alpha)\in
 \R^n\times\R:\, \varphi(x)\leq\alpha\},
 \]
\[
\;\;\text{and}\;\; [\varphi \leq \alpha]_{\mid{\dom \varphi}} :=\{x\in
 \dom\varphi:\, \varphi(x)\leq\alpha\}.
\]
We say that $\varphi$ is \emph{proper} if $\dom %
\varphi\neq \varnothing$ and that $\varphi$ is \emph{convex} if $\epi\varphi$ is convex. 

Let us recall that $\varphi$ is \emph{lower semicontinuous}
(l.s.c., for short) at $y \in \R^n$ if for every $\alpha \in \R$ with $\varphi(y) > \alpha$, there is $\delta > 0$ such that 
\[
\forall x\in \ball(y,\delta),\quad \varphi(x) > \alpha.
\] 
We simply say that $\varphi$ is l.s.c. if it is l.s.c. at 
every point of $\R^n$. Equivalently, $\varphi$ is l.s.c. if and only if its epigraph is closed.
We denote by $\mathcal{F}(\R^n)$ (resp. $\F^+(\R^n)
$) the set of extended-real-valued, proper and lower semicontinuous functions (resp. nonnegative). 
For a convex function $\varphi \in \mathcal{F}(\R^n)$ and for $x\in \dom \varphi$, a vector $\zeta \in \R^n$ is a \emph{subgradient} of $\varphi$ at $x$ if for all $y \in \R^n$, we have
$$\varphi(y) \geq 
\varphi(x) + \big\langle \zeta \,, y-x \big\rangle. $$
The \emph{Fenchel subdifferential} of $\varphi$ at $x$ collects all subgradients and is denoted by  $\partial\varphi (x)$.

We proceed by giving some definitions and results from \emph{nonsmooth analysis}. The basic references for these notions and facts can be found in details in \cite{clarke1, clarke2, Rock}.  
Let $\varphi$ be a function of $\F(\R^n)$ and let $x\in \dom \varphi$. We say that a vector $\zeta \in \R^n$ is a \emph{proximal subgradient} of $\varphi$ at $x$ if there exist $\eta > 0$ and $\sigma \geq 0$ such that 
$$\forall y \in \ball(x, \eta),\quad \varphi(y) \geq 
\varphi(x) + \big\langle \zeta \,, y-x \big\rangle - \sigma \Vert y-x\Vert
^{2}.$$
The \emph{proximal subdifferential} of $\varphi$ at $x$ collects all proximal subgradients and is denoted by  $\partial_P \varphi (x)$. The set $\partial_P \varphi (x)$ is convex, possibly empty and not necessarily closed.

Let $S$ be a nonempty and closed subset of $\R^n$ and let  $x$ be a point not lying in $S$.  A point $z \in S$ is called \emph{closest point} or \emph{projection of $x$ onto $S$}, denoted by $\proj_S (x)$, if and only if $\{z\} \subseteq S \cap \ballc (x; \Vert x-z \Vert)$ and  $S \cap \ball (x; \Vert x-z \Vert) = \varnothing$. In general, the projection $\proj_S(x)$ may be multivalued; it is single-valued
when the set $S$ is closed and convex.
In addition, $z \in \proj_S(x)$ if and only if for all $s \in [0,1]$, $z \in \proj_S (z+s(x-z))$. The collection of vectors in the form $s(x - z)$, where $s \geq 0$, is referred to as \emph{proximal normal cone to $S$}. This set can also be described in the following manner
   $N_S^{P}(x) := \partial_P I_S(x).$
If the set $S$ is convex, then $N_S^{P}(x)$ is denoted by $N_S(x)$ where 
 $N_S(x) := \partial I_S(x).$
Moreover, a geometric characterization of the notion of proximal subdifferential, 
 previously defined, is given by the following 
$$\zeta \in \partial_{P} \varphi (x) \iff (\zeta,-1)\in N_{\epi \varphi}^{P}(x, \varphi(x)).$$ 
In the case where the function $\varphi$ is also convex, we can have the same characterization of the normal cone in terms of the Fenchel subdifferential of $\varphi$.

Finally, we provide some useful results related to the proximal subgradients of the distance function $\mathbf{d} (\cdot;S)$ associated to a nonempty closed subset $S$. For more details, readers can refer to \cite{clarke2}.   
\begin{proposition}[Theorem~2.6 in \cite{clarke2}]
\label{proxdist}
Let $\emptyset\neq S\subset\R^n$ be a closed set, $x\notin S$, and $z\in \proj_S (x)$. Then, 
    \(\mbox{for all $s \in (0,1)$,}\quad 
    \partial_P\mathbf{d}\big(z+s(x-z);S\big) = \Big\{ \displaystyle{\frac{x-z}{\Vert x-z \Vert}}\Big\}.
\hfill\qed    \)
\end{proposition}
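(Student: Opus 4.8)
The plan is to fix $s\in(0,1)$, set $x_s:=z+s(x-z)$, $r:=\Vert x-z\Vert=\mathbf d(x;S)$, and $u:=\frac{x-z}{\Vert x-z\Vert}$, and to prove the two inclusions $\{u\}\subseteq\partial_P\mathbf d(x_s;S)$ and $\partial_P\mathbf d(x_s;S)\subseteq\{u\}$ separately. Both will follow by sandwiching $\mathbf d(\cdot;S)$ between two functions that are smooth near $x_s$ and coincide with it there. As preliminaries I would use the equivalence recalled just before the statement to get $z\in\proj_S(x_s)$, whence $\mathbf d(x_s;S)=\Vert x_s-z\Vert=sr$, together with the fact that $z\in\proj_S(x)$ means $\ball(x,r)\cap S=\varnothing$, i.e.\ $S\subseteq\{w:\Vert w-x\Vert\ge r\}$. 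Since $s\in(0,1)$ and $x\notin S\ni z$, note that $x_s\neq x$ and $x_s\neq z$.

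For $\{u\}\subseteq\partial_P\mathbf d(x_s;S)$ (which also yields nonemptiness), I would bound the distance function from below. For every $w\in S$ and every $y$, the estimate $\Vert y-w\Vert\ge\Vert x-w\Vert-\Vert x-y\Vert\ge r-\Vert x-y\Vert$ holds, so taking the infimum over $w\in S$ gives $\mathbf d(y;S)\ge h(y):=r-\Vert y-x\Vert$ for all $y$, with equality at $y=x_s$ because $r-\Vert x_s-x\Vert=r-(1-s)r=sr$. The map $h$ is $C^2$ on a neighborhood of $x_s$ (as $\Vert x_s-x\Vert=(1-s)r>0$) and satisfies $\nabla h(x_s)=\frac{x-x_s}{\Vert x-x_s\Vert}=u$; bounding its Hessian on a small ball produces $\sigma\ge0$ with $h(y)\ge h(x_s)+\langle u,y-x_s\rangle-\sigma\Vert y-x_s\Vert^2$ locally. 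Chaining this with $\mathbf d(y;S)\ge h(y)$ and $\mathbf d(x_s;S)=h(x_s)$ yields exactly the proximal subgradient inequality for $u$ at $x_s$.

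For $\partial_P\mathbf d(x_s;S)\subseteq\{u\}$, take any $\zeta\in\partial_P\mathbf d(x_s;S)$, with associated constants $\sigma,\eta$. Here I would compare from above via $g(y):=\Vert y-z\Vert$: since $z\in S$ one always has $\mathbf d(y;S)\le g(y)$, with equality at $x_s$. Replacing $\mathbf d(y;S)$ by its upper bound $g(y)$ in the proximal inequality shows that the map $y\mapsto g(y)-\langle\zeta,y-x_s\rangle+\sigma\Vert y-x_s\Vert^2$ attains a local minimum at $x_s$. As $g$ is $C^1$ near $x_s$ (because $x_s\neq z$), the first-order optimality condition forces $\zeta=\nabla g(x_s)=\frac{x_s-z}{\Vert x_s-z\Vert}=u$, so every proximal subgradient equals $u$.

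I expect the only genuinely delicate point to be the lower estimate above: the correct comparison function is the smooth $h(y)=r-\Vert y-x\Vert$ built from the \emph{far} centre $x$ (rather than from the contact point $z$), and one must control its quadratic remainder. This control is available precisely because $x_s$ keeps a fixed positive distance $(1-s)r$ from $x$ when $s\in(0,1)$; this is where the strict restriction $s<1$ (and, symmetrically through $g$, the restriction $s>0$) is essential, since at the endpoints $h$ ceases to be smooth at $x$ and $g$ ceases to be smooth at $z$. The upper comparison with $g$ and the ensuing first-order argument are routine.
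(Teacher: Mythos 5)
Your proof is correct, but it takes a genuinely different route from the paper: the paper disposes of this proposition in one line by citing Theorem~6.1 of Clarke--Ledyaev--Stern--Wolenski, whereas you give a self-contained, two-sided smooth-comparison argument. Your lower comparison $\mathbf{d}(y;S)\ge h(y):=r-\Vert y-x\Vert$ (valid because $S\cap\ball(x,r)=\varnothing$), touching $\mathbf{d}(\cdot;S)$ at $x_s$ with $\nabla h(x_s)=u$, correctly yields $u\in\partial_P\mathbf{d}(x_s;S)$ via the proximal inequality for the smooth minorant; and your upper comparison $\mathbf{d}(y;S)\le g(y):=\Vert y-z\Vert$, touching at $x_s$ with $\nabla g(x_s)=u$ (since $x_s-z=s(x-z)$), forces every proximal subgradient to equal $\nabla g(x_s)=u$ by first-order optimality of $y\mapsto g(y)-\langle\zeta,y-x_s\rangle+\sigma\Vert y-x_s\Vert^2$ at $x_s$. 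All the needed nondegeneracies ($\Vert x_s-x\Vert=(1-s)r>0$ and $\Vert x_s-z\Vert=sr>0$) hold precisely because $s\in(0,1)$ and $r=\mathbf{d}(x;S)>0$, and your closing remark correctly identifies why the formula degenerates at the endpoints. What the paper's approach buys is brevity and reliance on a standard reference; what yours buys is a transparent, elementary proof using only the definition of the proximal subgradient and the projection characterization already recalled in the paper, and it makes visible the geometric reason for the strict restriction $s\in(0,1)$. One small economy you could note: you do not actually need the paper's equivalence $z\in\proj_S(x_s)$ as a black box, since $\mathbf{d}(x_s;S)=sr$ already follows from your two bounds ($\mathbf{d}(x_s;S)\ge h(x_s)=sr$ and $\mathbf{d}(x_s;S)\le\Vert x_s-z\Vert=sr$).
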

\begin{theorem}[Theorem~2.6 in \cite{clarke2}]
    \label{mvi}
       Given $x,y \in \R^n,$ for all $r < \mathbf{d}(y;S) - \mathbf{d}(x;S)$ and $\vp > 0$, there exist $z \in [x,y] +\vp\ball$  and $\zeta \in \partial_P \mathbf{d}(z;S)$ such that 
       \(
       r < \big\langle \zeta,y-x\big\rangle.
       \)
\end{theorem}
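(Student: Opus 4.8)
The plan is to prove this as a special case of the proximal mean value inequality, exploiting that $\mathbf{d}(\cdot;S)$ is nonnegative and $1$-Lipschitz. I would first dispose of the trivial case $x=y$ (then $r<0$, and any nearby $z$ with a proximal subgradient works since $\langle\zeta,y-x\rangle=0>r$), so assume $x\neq y$. Fix $r_1$ with $r<r_1<\mathbf{d}(y;S)-\mathbf{d}(x;S)$ and tilt the distance function by setting $g(w):=\mathbf{d}(w;S)-\tfrac{r_1}{\|y-x\|^2}\langle y-x,\,w-x\rangle$, so that $g(y)-g(x)=\mathbf{d}(y;S)-\mathbf{d}(x;S)-r_1>0$. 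For a parameter $\lambda>0$ I would then introduce the penalized functional $g_\lambda(w):=g(w)+\lambda\,\mathbf{d}(w;[x,y])^2$. Since $\mathbf{d}(\cdot;S)\ge 0$ and the penalty grows quadratically while the tilt is only linear, $g_\lambda$ is coercive and l.s.c., hence attains a global minimum at some $z_\lambda$, with $g_\lambda(z_\lambda)\le g_\lambda(x)=g(x)$.

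The next step is to read off the optimality condition at this \emph{unconstrained} minimizer. Because $w\mapsto\lambda\,\mathbf{d}(w;[x,y])^2$ is $C^{1,1}$ (squared distance to the convex segment, with gradient $2\lambda(w-\proj_{[x,y]}(w))$) and the tilt is linear, the exact proximal sum rule for adding a $C^{1,1}$ term applies, yielding
\[
\zeta_\lambda:=\frac{r_1}{\|y-x\|^2}(y-x)-2\lambda\big(z_\lambda-\pi_\lambda\big)\in\partial_P\mathbf{d}(z_\lambda;S),\qquad \pi_\lambda:=\proj_{[x,y]}(z_\lambda).
\]
Pairing with $y-x$ gives $\langle\zeta_\lambda,\,y-x\rangle=r_1-2\lambda\langle z_\lambda-\pi_\lambda,\,y-x\rangle$, and the sign of the correction is controlled by where $\pi_\lambda$ sits on the segment. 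If $\pi_\lambda$ lies in the relative interior of $[x,y]$ then $z_\lambda-\pi_\lambda\perp(y-x)$ and the correction vanishes; if $\pi_\lambda=x$ then $z_\lambda-x\in N_{[x,y]}(x)$ forces $\langle z_\lambda-x,\,y-x\rangle\le 0$; in both cases $\langle\zeta_\lambda,\,y-x\rangle\ge r_1>r$, as desired.

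The main obstacle is the remaining \emph{end-cap} case $\pi_\lambda=y$, in which $z_\lambda$ drifts past $y$ and the correction term could push $\langle\zeta_\lambda,y-x\rangle$ below $r$. I would rule this out for large $\lambda$ by a lower-semicontinuity argument. The estimate $g_\lambda(z_\lambda)\le g(x)$ together with $\mathbf{d}(\cdot;S)\ge 0$ gives $\lambda\,\mathbf{d}(z_\lambda;[x,y])^2\le g(x)+\tfrac{|r_1|}{\|y-x\|}\,\|z_\lambda-x\|$, from which $\mathbf{d}(z_\lambda;[x,y])\to 0$ as $\lambda\to\infty$. If $\pi_\lambda=y$ held along some sequence $\lambda_k\to\infty$, then $z_{\lambda_k}\to y$, so lower semicontinuity of $g$ would force $\liminf_k g(z_{\lambda_k})\ge g(y)$, contradicting $g(z_{\lambda_k})\le g_{\lambda_k}(z_{\lambda_k})\le g(x)<g(y)$. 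Hence for all large $\lambda$ one simultaneously has $\pi_\lambda\neq y$ and $\mathbf{d}(z_\lambda;[x,y])<\vp$, so that $z:=z_\lambda\in[x,y]+\vp\B$ and $\zeta:=\zeta_\lambda\in\partial_P\mathbf{d}(z;S)$ satisfy $\langle\zeta,\,y-x\rangle\ge r_1>r$, which completes the proof.
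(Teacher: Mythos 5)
Your proof is correct, but it takes a genuinely different route from the paper's. The paper does not reprove the statement at all: its proof is a one-line specialization of the multidirectional mean value inequality of Clarke--Ledyaev--Stern--Wolenski, taking $Y:=\{y\}$ and $f(\cdot):=\mathbf{d}(\cdot;S)$ in \cite[Theorem~2.6]{clarke2}. You instead give a self-contained penalization argument: tilt $\mathbf{d}(\cdot;S)$ by the linear term encoding $r_1$, add the $C^{1,1}$ penalty $\lambda\,\mathbf{d}(\cdot;[x,y])^2$, obtain a global minimizer $z_\lambda$ by coercivity (which exploits that $\mathbf{d}(\cdot;S)$ is nonnegative, globally Lipschitz, finite everywhere, and that the setting is $\R^n$), extract $\zeta_\lambda\in\partial_P\mathbf{d}(z_\lambda;S)$ via the exact proximal sum rule for a $C^{1,1}$ summand using $\nabla\mathbf{d}^2(w;[x,y])=2\big(w-\proj_{[x,y]}(w)\big)$, and control $\langle\zeta_\lambda,y-x\rangle$ through the normal cone $N_{[x,y]}(\pi_\lambda)$, eliminating the only bad configuration $\pi_\lambda=y$ by the energy estimate $g(z_\lambda)\le g(x)<g(y)$ combined with lower semicontinuity. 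In effect you reprove the singleton case of the cited theorem in finite dimensions; what this buys is independence from the black-box citation, using only tools the paper already deploys elsewhere (projections onto convex sets, the gradient of the squared distance, proximal subgradients at minimizers), at the cost of redoing work the paper gets for free, and the argument would not survive verbatim in infinite dimensions, where existence of the minimizer requires a variational principle rather than coercivity plus compactness.

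Two steps are glossed over, both harmless. First, to deduce $\mathbf{d}(z_\lambda;[x,y])\to 0$ from $\lambda\,\mathbf{d}(z_\lambda;[x,y])^2\le g(x)+c\,\|z_\lambda-x\|$ you also need the elementary bound $\|z_\lambda-x\|\le \mathbf{d}(z_\lambda;[x,y])+\|y-x\|$; then the resulting quadratic inequality $\lambda\,\mathbf{d}(z_\lambda;[x,y])^2\le K+c\,\mathbf{d}(z_\lambda;[x,y])$, with $K:=g(x)+c\,\|y-x\|\ge 0$, forces $\mathbf{d}(z_\lambda;[x,y])=O(\lambda^{-1/2})$. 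Without this remark, an a priori unbounded $\|z_\lambda-x\|$ would invalidate the conclusion. Second, in the trivial case $x=y$, the existence of some $z\in\B(x,\varepsilon)$ with $\partial_P\mathbf{d}(z;S)\neq\varnothing$ requires justification: either the proximal density theorem, or, more in the spirit of the paper, Proposition~\ref{proxdist} applied along a segment joining $x$ to one of its projections onto $S$ (and $0\in\partial_P\mathbf{d}(x;S)$ when $x\in S$).
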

We now introduce the concept of local prox-regularity for sets. For a more comprehensive discussion on local prox-regularity, refer to \cite{Poliquin2000LocalDO,Thibaultbook}. 
\begin{definition}
    \label{proxreg}
    For positive real numbers $r$ and $\alpha$, the closed set $S$ is said to be $(r,\alpha)$\emph{-prox-regular} at a point $\bar{x}  \in S$ provided that for any $x \in S \cap \ball (\bar{x},\alpha)$ and any $v \in N^P_S(x)$ such that $\Vert v \Vert < r$, one has $x = proj_S(x + v)$. 
    The set $S$ is $r$\emph{-prox-regular} (resp. \emph{prox-regular}) at $\bar{x}$ when it is $(r,\alpha)$-prox-regular at $\bar{x}$ for some real $\alpha > 0$  (resp. for some numbers $r > 0$ and $\alpha > 0$). The set $S$ is said to be $r$-\emph{uniformly prox-regular} when $\alpha = +\infty$.
\end{definition}
Uniformly prox-regular sets include and strictly contain convex sets. Thus, every closed and convex set is $r$-uniformly prox-regular for any $r \geq 0$.

If $\varphi: \R^n \to \R$ is locally Lipschitz and if $\mathcal{N}$ be any subset of zero measure in $\R^n$,
and if $\mathcal{N}_\varphi$ be the set of points in $\R^n$ at which $\varphi$ fails to be differentiable, we define the \emph{Clarke subdifferential} of $\varphi$ at $x\in \dom \varphi$ as
\[
 \partial_C \varphi(x) = \clco \{\lim_{i \to +\infty} \nabla \varphi(x_i), x_i \to x, x_i \notin \mathcal{N}, x_i \notin \mathcal{N}_\varphi\}.
\]
In addition to nonsmooth analysis results, important properties related to maximally monotone operators are needed to provide a more comprehensive understanding of \eqref{eq:prob}. A multifunction $A: \R^n \rightrightarrows \R^n$ is
 said to be \emph{monotone} if 
\[
\forall (y_1,y_2) \in Ax_1 \times Ax_2,\quad \langle y_1 -
 y_2,x_1 - x_2 \rangle \geq 0. 
\]
The \emph{domain} of $A$ is the set 
\(
\dom A = \big\{x\in \R^n :\,  A(x) \neq \varnothing \big\}.
\)
A monotone operator $A$ is \emph{maximally monotone} 
provided its graph given by  $\gph A = \big\{(x,y)\;: y\in A(x)\big\},$  
cannot be properly enlarged without destroying monotonicity.  

Unlike its closure,  the domain of a  maximally  monotone operator may not be closed and convex (it is nearly convex; see {\cite{Rock}}). Its  values  are closed and convex but may be unbounded or even empty. A typical  example of  maximally  monotone operator is the Fenchel 
subdifferential of an extended-real-valued lower 
semicontinuous and convex function $\varphi$. We have 
\(
\dom (\partial \varphi) \subseteq \dom \varphi \subseteq
\cl(\dom \varphi) = \cl(\dom \partial\varphi).
\)
A maximally monotone operator $A$ is locally bounded at $x$ if and only if $x\in \inti(\dom A)$, see \cite{Rock1,phelps}. The Fenchel subdifferential of a  
{proper l.s.c. convex function} is locally 
bounded on the interior of its domain. When applied to  the indicator function  of a convex closed set $C$, this implies that operator  $N_C$ is locally bounded on $\inti C$. We refer the reader to standard references such as \cite{Bauschke, Rock} for more details.
\section{Main Results}
\label{sec:main-results}
Throughout \eqref{eq:prob} is considered together with its equivalent formulation
\[
\dot{x}(t) \in f(x(t)) - \clco A_0(x(t)) - N_C(x(t)),
\qquad C := \cl(\dom A),
\]
obtained from a structural decomposition of the maximally monotone operator $A$.

As stated in the Introduction, the objective of this work is to investigate \emph{pointwise asymptotic stability} $(\mathbf{PAS})$ and \emph{semistability} $(\mathbf{SS})$ for problem~\eqref{eq:prob}. The main analytical difficulty arises from the presence of the convexified limit mapping $\clco A_0$, which is in general only outer semicontinuous and not Lipschitz continuous, preventing a direct application of classical Lyapunov and invariance techniques.

Our approach is geometric in nature and relies on Hamiltonian inequalities associated with the set-valued mapping
$f - \clco A_0 - N_C$. Rather than approximating individual trajectories, the analysis is carried out at the level of Lyapunov functions and Hamiltonians, using Lipschitz outer approximations of $\clco A_0$ as auxiliary analytical tools. These approximations allow for a controlled approximation of the Hamiltonian, denoted by $\boldsymbol{h}$, while preserving the essential structure of the original dynamics, making it possible to transfer stability and invariance properties from the regularized systems to the original differential inclusion.

Under a uniform boundedness assumption on $\clco A_0$ and a local boundedness assumption on $f$, this framework yields invariance, convergence, and semistability results for problem~\eqref{eq:prob}.

Our first stability result establishes a Lyapunov decrease and an invariance principle for sublevel sets of lower semicontinuous functions.
\begin{theorem}[Invariance and decrease]
\label{t:invariance}
Suppose that~\eqref{eq:uniform-boundedness} and ~\eqref{bound} hold, and there exists a function $V \in \mathcal{F}(\R^n)$ such that, for all $x \in C$, 
\begin{align}\label{eq:H1}
\boldsymbol{h}\big(x,\partial_P V (x)\big) \leq 0.
\tag{$\mathcal{H}1$}
\end{align}
Let $x(t)$ be a solution of \eqref{eq:prob}. Then the following statements hold:
\begin{enumerate}
\item\label{t:invariance_decrease}
The pair $\big(V,f-\clco A_0-N_C\big)$ is decreasing.
\item\label{t:invariance_main} 
There exists an $\alpha > 0$  such that  $\big([V\leq\alpha]_{\mid{\dom V}} ,f-\clco A_0-N_C\big)$ is invariant.
\end{enumerate}
\end{theorem}
The next result provides a convergence criterion based on a Lyapunov pair and yields pointwise asymptotic stability of the associated zero set.
\begin{proposition}[Convergence and pointwise asymptotic stability]
\label{ConvW}
Suppose that~\eqref{eq:uniform-boundedness} and \eqref{bound} hold, and there exists $W\in \F^+ (\R^n)$ such that, for all $x \in C$, 
    \begin{align}\label{eq:H2}
    \boldsymbol{h}\big(x,\partial_P V (x)\big) \leq -W(x).
    \tag{$\mathcal{H}2$}
    \end{align}
Then, for every $x_0$ there exists a solution $x(\cdot)$ of  \eqref{eq:prob} with $x(0) = x_0$ such that 
\[
\lim_{t\to+\infty}\mathbf d(x(t);W^{-1}(0))=0.
\]
If each point in $W^{-1}(0)$ exhibits Lyapunov stability, then $W^{-1}(0)$ qualifies as a $\mathbf{PAS}$.
\end{proposition}
The final result is on semistability of the equilibrium set associated with~\eqref{eq:prob}.
\begin{proposition}
\label{ConvE}
Suppose that condition $(\mathcal{A}_1)$ from Definition \ref{PAS} is satisfied for the set $\mathcal{E}$  and that the solution of \eqref{eq:prob} is bounded. In addition, suppose that for a given $x_0 \in C$, $\displaystyle\lim_{t \to +\infty} \mathbf{d}(x(t);\mathcal{E}) = 0.$ Then $x(t) \to z$ where $z \in \mathcal{E}$.
\end{proposition}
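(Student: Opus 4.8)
The plan is to exploit the boundedness of the trajectory to extract a convergent subsequence whose limit lies in $\mathcal{E}$, and then to promote this subsequential convergence into convergence of the whole trajectory by invoking the Lyapunov stability of the limit point together with the autonomy of \eqref{eq:prob}. This is the classical mechanism by which ``convergence to a set of Lyapunov stable equilibria'' is upgraded to ``convergence to a single equilibrium.''

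First I would use the boundedness of $x(\cdot)$: fix any sequence $t_n \to +\infty$; then $(x(t_n))_n$ is bounded, so by the Bolzano--Weierstrass theorem it admits a subsequence (not relabeled) converging to some $z \in \R^n$. Since the distance function $\mathbf{d}(\,\cdot\,;\mathcal{E})$ is continuous and $\mathbf{d}(x(t);\mathcal{E}) \to 0$ by hypothesis, passing to the limit along the subsequence gives $\mathbf{d}(z;\mathcal{E}) = 0$. Because $\mathcal{E}$ is closed---$f$ being continuous and $x \mapsto \clco A_0(x) + N_C(x)$ having closed graph, thanks to the upper semicontinuity of $\clco A_0$ established in Proposition~\ref{prop:usc-locbnd} and the closedness of the graph of $N_C$---it follows that $z \in \mathcal{E}$.

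The heart of the argument is the stability upgrade. By hypothesis $(\mathcal{A}_1)$, the point $z \in \mathcal{E}$ is Lyapunov stable: for every $\varepsilon > 0$ there exists $\delta > 0$ such that every solution of \eqref{eq:prob} starting within $\delta$ of $z$ remains within $\varepsilon$ of $z$ for all future times. Since \eqref{eq:prob} is autonomous and its solutions are unique, the time-shifted map $s \mapsto x(t_n + s)$ is itself the solution of \eqref{eq:prob} issuing from $x(t_n)$. For $n$ large enough we have $\|x(t_n) - z\| \le \delta$, and Lyapunov stability then forces $\|x(t_n + s) - z\| < \varepsilon$ for all $s \ge 0$, that is, $\|x(t) - z\| < \varepsilon$ for every $t \ge t_n$. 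Hence $\limsup_{t \to +\infty} \|x(t) - z\| \le \varepsilon$, and since $\varepsilon > 0$ is arbitrary we conclude $x(t) \to z \in \mathcal{E}$.

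The main obstacle I anticipate is the correct handling of this stability step: Lyapunov stability is formulated for solutions \emph{starting} near $z$, so to apply it along the already-running trajectory one must invoke the autonomy of the dynamics together with the uniqueness of solutions, which together guarantee that the time-shifted trajectory coincides with the genuine solution launched from $x(t_n)$. A secondary technical point is ensuring that $z$ belongs to $\mathcal{E}$ itself rather than merely to its closure; this reduces to the closedness of $\mathcal{E}$, which I would deduce exactly as indicated above from the continuity of $f$, the upper semicontinuity of $\clco A_0$ from Proposition~\ref{prop:usc-locbnd}, and the closedness of the graph of $N_C$.
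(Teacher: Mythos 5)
Your proof is correct and follows essentially the same route as the paper, which only sketches the argument in one sentence (bounded trajectory, $\omega$-limit point lying in $\mathcal{E}$, then Lyapunov stability of that point forcing convergence of the whole trajectory). Your write-up simply makes this sketch rigorous: the subsequential limit plus closedness of $\mathcal{E}$ replaces the paper's $\omega$-limit-set language, and the time-shift/uniqueness argument is exactly the standard mechanism the paper is invoking when it combines attractivity to $\mathcal{E}$ with condition $(\mathcal{A}_1)$.
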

\begin{theorem}[Semistability]
\label{SS}
Suppose that the conditions outlined in Theorem \ref{t:invariance} are met. If, in addition, $\mathcal{E} \subseteq W^{-1}(0)$ and every point of $W^{-1}(0)$ is Lyapunov stable, then \eqref{eq:prob} is $\mathbf{SS}$. 
\end{theorem}
The proofs of the above results rely on two key analytical ingredients. The first one concerns the structure of the velocity of solutions to \eqref{eq:prob}. In particular, Lemma~\ref{Thibault}, which is a consequence of \cite[Theorem~3.1]{Thibault2013} adapted to the case where the set $C$ is convex, provides a precise decomposition of the velocity $\dot{x}(t)$ into components belonging to $\clco A_0(x(t))$ and $N_C(x(t))$, together with a crucial estimate on the normal component. This result plays a fundamental role in controlling the dynamics near the boundary of $C$ and in establishing the Lipschitz regularity of solutions.

The second ingredient is the approximation of the lower Hamiltonian associated with the original dynamics. Using Lipschitz outer approximations $F_k$ of $\clco A_0$, Proposition~\ref{prop:hamiltonian-approx} establishes a quantitative relation between the original Hamiltonian $\boldsymbol{h}$ and its regularized counterpart $\boldsymbol{h}_k$. This estimate ensures that Hamiltonian inequalities verified for the regularized system can be transferred to the original differential inclusion,
without requiring convergence of trajectories.

These two tools combine geometric arguments, Lyapunov pairs, and Hamiltonian inequalities to derive invariance, convergence, and semistability properties for problem~\eqref{eq:prob}. Proofs of all the results stated in this section are given in Section~\ref{sec:proofs-main-results}.
\section{Analytical Ingredients for the Main Results}
\label{sec:ingredients}
\subsection{Structure of Maximally Monotone Operators}
\label{sec:maxmonotone}
Maximally monotone operators exhibit distinct behaviors in the interior and at the boundary of their domain, and understanding their connection provides valuable insights into their structure. To explore these properties, consider a maximally monotone operator $A: \R^n \rightrightarrows \R^n$ with $\inti(\dom A)$ nonempty, and let $E$ be the subset of $\inti(\dom A)$ on which $A$ is single-valued. Define the mapping \( A_0 : \R^n \rightrightarrows \R^n \) by
\begin{align}
\label{eq:A0}
A_0(x) = \{ v: \exists\, (x_k)\subset E \text{~with~} x_k\to x \text{~and~} A(x_k) \to v \}.    
\end{align}
According to \cite[Theorem 12.67]{Rock}, \( A_0 \) is single-valued on \( E \) and agrees there with \( A \); moreover, $\dom A = \dom A_0 \subseteq \cl E = \cl(\dom A)$ and, for all $x \in \R^n$,
\begin{align}
\label{operator}
A(x) = \clco A_0(x) + N_{\cl(\dom A)}(x).
\end{align}
where $\clco A_0(x)$ means the closed convex hull of $ A_0(x)$.
It is known that \( A \) is continuous on \( E \), the set where it is single-valued and that the set of points where \( A \) is differentiable is a dense subset of \( \dom A \) contained in \( E \). Thus, \( A_0 \) is single-valued and continuous on \( E \), coinciding with \( A \) on this set. In addition,  \( A \equiv A_0 \) is locally bounded on \( \inti(\dom A) \). Furthermore, 
\(
\cl E = \cl(\dom A_0) = \cl(\dom A).
\)

Decomposition \eqref{operator} is better understood by examining regularity properties of \( x \mapsto \clco A_0(x) \), namely its outer semicontinuity and local boundedness on \( \cl(\dom A) \).
\begin{definition}[Outer semicontinuity {\cite[Definition~5.4]{Rock}}]
A set-valued mapping $F:\mathbb{R}^n \rightrightarrows \mathbb{R}^m$ is said to be \emph{outer semicontinuous} at $\bar x$ if
\(
\limsup_{x\to\bar x} F(x) \subseteq F(\bar x).
\)
The mapping $F$ is said to be outer semicontinuous if it is outer semicontinuous at every point.
\end{definition}
\begin{proposition}[Outer semicontinuity and local boundedness of \( \clco A_0 \)]
\label{prop:usc-locbnd}
Let \( A : \R^n \rightrightarrows \R^n \) be maximally monotone with \( \inti(\dom A) \) nonempty, and \( A_0 \) be given in~\eqref{eq:A0}. Then the set-valued mapping \( x \mapsto \clco A_0(x) \) is outer semicontinuous and locally bounded on \( \cl(\dom A) \).
\end{proposition}
\begin{proof}
We begin by establishing the closedness of the graph of \( A_0 \). Let \( x_k \to x \in \cl(E) \) and \( v_k \in A_0(x_k) \) with \( v_k \to v \). By the definition of \( A_0 \), for each \( k \), there exists a sequence \( (y_{k,\ell})_{\ell \in \N} \subset E \) such that
\(
y_{k,\ell} \to x_k \quad \text{and} \quad A(y_{k,\ell}) \to v_k \quad \text{as } \ell \to +\infty.
\)

For each \( k \), select an index \( \ell_k \in \N \) such that
\(
\|y_{k,\ell_k} - x_k\| < \tfrac{1}{k}, \quad \text{and} \quad \|A(y_{k,\ell_k}) - v_k\| < \tfrac{1}{k},
\)
and define \( y_k := y_{k,\ell_k} \in E \). Then
\begin{align*}
\|y_k - x\| &\leq \|y_k - x_k\| + \|x_k - x\| \to 0, \textnormal{ and }\\
\|A(y_k) - v\| & \leq \|A(y_k) - v_k\| + \|v_k - v\| \to 0.
\end{align*}
Hence, \( y_k \to x \) and \( A(y_k) \to v \), so \( v \in A_0(x) \). Therefore, the graph of \( A_0 \) is closed.

Since each value \( \clco A_0(x) \) is nonempty, convex, and closed, and since the operation \( x \mapsto \clco A_0(x) \) preserves graph closedness, it follows that the mapping \( x \mapsto \clco A_0(x) \) also has a closed graph. By a classical result in variational analysis (see \cite[Theorem~5.7]{Rock}), a set-valued mapping with nonempty closed values is outer semicontinuous if and only if its graph is closed. Thus, \( x \mapsto \clco A_0(x) \) is outer semicontinuous on \( \cl(E) = \cl(\dom A) \).

To show local boundedness, observe that \( A \equiv A_0 \) is continuous and locally bounded on \( E \subseteq \inti(\dom A) \). Since \( A_0 \) has a closed graph and is locally bounded on the dense set \( E \), it is also locally bounded on \( \cl(E) \). Fix \( x \in \cl(E) \). Then there exists a neighborhood \( U \) of $x$ and a constant \( M > 0 \) such that
\(
\bigcup_{y \in U} A_0(y) \subseteq \ball(0, M).
\)
As convex hulls of bounded sets are bounded, for all $y \in U$,
\(
\clco A_0(y) \subseteq \clco(\ball(0, M)) = \ball(0, M),
\)
and therefore
\(
\bigcup_{y \in U} \clco A_0(y) \subseteq \ball(0, M),
\)
which proves that \( \clco A_0 \) is locally bounded at \( x \). Since \( x \in \cl(E) \) was arbitrary, local boundedness holds on \( \cl(\dom A) \).
\end{proof}
The expression \eqref{operator} can be applied to the case where \( A = \partial \varphi\), the Fenchel subdifferential of an extended-real-valued, lower semicontinuous, and convex function \( \varphi \). More precisely, since the convex function \( \varphi \) is locally Lipschitz on \( \inti(\dom \varphi) \), we deduce that \(\clco A_0 = \partial_C \varphi \) which coincides with the closed convex hull of \( \nabla \varphi \) on the differentiable region of \( \varphi \). Therefore, the decomposition becomes
\begin{align*}
\partial \varphi(x) = \partial_C \varphi(x) + N_{\cl(\dom \varphi)}(x)
\end{align*}
(see also \cite[Theorem~25.6]{Rock2}). Moreover, since \( \varphi \) is locally Lipschitz on \( \inti(\dom \varphi) \), its Clarke subdifferential \( \partial_C \varphi \) is locally bounded on this set.
\subsection{Construction of a Lipschitz Continuous Selection }
\label{sec:selection}
The goal of this section is to construct a Lipschitz continuous selection from the set-valued mapping
\[
x \mapsto \clco A_0(x),
\]
which, as previously established, is outer semicontinuous with nonempty, closed, and convex values on \( \cl(\dom A) \). However, these regularity properties are not sufficient to apply the approximation result we rely on--namely, \cite[Theorem~2.5]{smirnov2002}--which requires that the values of the mapping be uniformly bounded over the domain.

In general, the mapping \( x \mapsto \clco A_0(x) \) is not uniformly bounded on \( \cl(\dom A) \), that is,
\[
\bigcup_{x \in \cl(\dom A)} \clco A_0(x)
\]
may be unbounded. To overcome this limitation, we assume that
\begin{equation}
\label{eq:uniform-boundedness}
\text{there exists~} b > 0 \text{ such that, for all } x \in \cl(\dom A),\ \clco A_0(x) \subseteq b \B, 
\end{equation}
where \( \B \) denotes the closed unit ball in \( \R^n \).
\begin{remark}
    Assumption~\eqref{eq:uniform-boundedness} is restrictive from the viewpoint of
general maximally monotone operators. It is nevertheless essential for the regularization strategy adopted in this work, as it allows the construction of Lipschitz approximations and single-valued selections with \emph{uniformly controlled Lipschitz constants}.
Under weaker boundedness or regularity assumptions, standard regularization procedures typically lead to selections whose Lipschitz constants \emph{blow up} when approaching the boundary of $\dom A$, which is not sufficient for the geometric and selection-based analysis developed here.
\end{remark}
The following result will be used in our analysis.
\begin{theorem}
[{\cite[Theorem~2.5]{smirnov2002}}]
\label{smirnov}
Let \( F : \R^n \rightrightarrows \R^n \) be an outer semicontinuous set-valued mapping with closed convex values. Suppose that there exists \( b > 0 \) such that,  for all \( x \in \R^n \), \( F(x) \subseteq b \B \). Then there exists a sequence of locally Lipschitz set-valued mappings \( F_k : \R^n \rightrightarrows \R^n \), \( k \in \mathbb{N} \), satisfying the following conditions:
\begin{enumerate}
    \item For every \( x \in \R^n \),
    \[
    F(x) \subseteq \cdots \subseteq F_{k+1}(x) \subseteq F_k(x) \subseteq \cdots \subseteq F_0(x) \subseteq b \B;
    \]
    \item For every \( \varepsilon > 0 \) and \( x \in \R^n \), there exists an integer \( k(\varepsilon, x) \) such that
    \[
    F_k(x) \subseteq F(x) + \varepsilon \B \quad \text{whenever } k > k(\varepsilon, x).
    \]
\end{enumerate}
\end{theorem}
This result provides the foundation for constructing a regularized selection that approximates \( \clco A_0 \) while possessing enhanced regularity. For each fixed \( k \), the corresponding approximation \( F_k \) admits a locally Lipschitz continuous selection, which can be used as a smooth replacement for \( \clco A_0 \) in the analysis of regularized dynamics or stability properties. Since \( \clco A_0(x) \subseteq F_k(x) \) for every \( x \in \R^n \), the original differential inclusion
\[
\dot{x}(t) \in f(x(t)) - \clco A_0(x(t)) - N_{\cl(\dom A)}(x(t))
\]
can be replaced by the relaxed system
\[
\dot{x}(t) \in f(x(t)) - F_k(x(t)) - N_{\cl(\dom A)}(x(t)).
\]
This substitution is not intended to approximate the trajectories of the original system, but rather to define a regularized inclusion that preserves the geometric structure while enjoying enhanced regularity. In particular, the local Lipschitz continuity of \( F_k \) allows for the construction of a single-valued, locally Lipschitz selection. This results in a differential inclusion with a well-defined and more manageable right-hand side, as shown in the following lemma.

By applying Theorem~\ref{smirnov} to the approximation \( F_k \) of
\( \clco A_0 \), we associate with each fixed \( k \) a canonical
single-valued selection defined via the Steiner point.
More precisely, we set
\[
\psi_k(x) := s\bigl(F_k(x)\bigr), \qquad x \in \cl(\dom A),
\]
where \( s \) denotes the Steiner point mapping recalled in~\ref{appendixA}.

Since each \( F_k(x) \) is nonempty, convex, compact, and uniformly
bounded by \( b \B \), the Steiner selection \( \psi_k \) is well defined and satisfies \( \psi_k(x) \in F_k(x) \) for all
\( x \in \cl(\dom A) \). Moreover, because \( F_k \) is locally Lipschitz continuous with respect to the Hausdorff--Pompeiu distance, Theorem~\ref{thm:steiner-aubin} and Lemma~\ref{lem:steiner-selection} (~\ref{appendixA}) ensure that the mapping \( x \mapsto \psi_k(x) \) is locally Lipschitz on \( \cl(\dom A) \). In addition, since \(s\bigl(F_k(x)\bigr)\in F_k(x)\) and \(F_k(x)\subset b\B\), we have
$\|\psi_k(x)\|\le b$ for all $x \in \cl(\dom A).$

As a consequence, the regularized vector field
$x \longmapsto f(x) - \psi_k(x)$ is locally Lipschitz on \( \cl(\dom A) \). Therefore, for every initial condition in \( \cl(\dom A) \), the regularized differential inclusion
\[
\dot{x}(t) \in f(x(t)) - \psi_k(x(t)) - N_C(x(t))
\]
admits a unique solution.

Recall that throughout the analysis, the regularized inclusion involving \( F_k \) provides a structural replacement that preserves the essential geometric properties of the original system, without aiming to approximate its trajectories.
\begin{example}
\label{ex:approx}
Let \( \varphi : \R^n \to \R \cup \{+\infty\} \) be a proper lower semicontinuous convex function, and let \( A := \partial \varphi \) be its subdifferential. As mentioned after Proposition~\ref{prop:usc-locbnd}, we have \( \clco A_0(x) = \partial_C \varphi(x) \). Suppose that \( \clco A_0 \) is uniformly bounded on \( \cl(\dom \varphi) \), i.e., there exists \( b > 0 \) such that, for all \( x \in \cl(\dom \varphi) \),
\begin{equation}
\label{eq:boun-clarke}
\partial_C \varphi(x) \subseteq b \B.
\end{equation}
Then, by Theorem~\ref{smirnov}, there exists a sequence of set-valued mappings \( (F_k) \) with nonempty, convex, compact values and locally Lipschitz graphs such that, for all \( x \in \cl(\dom \varphi) \),
\[
\partial_C \varphi(x) \subseteq F_k(x) \subseteq b \B .
\]
By applying the Steiner selection to \( F_k \), we define
\[
\psi_k(x) := s\bigl(F_k(x)\bigr), \qquad x \in \cl(\dom \varphi),
\]
where \( s \) denotes the Steiner point mapping recalled in~\ref{appendixA}. Note that condition~\eqref{eq:boun-clarke} is automatically satisfied if \(\varphi \) is globally Lipschitz on \( \dom \varphi \).

We construct an approximation \( F_k : \R^n \rightrightarrows \R^n \),
indexed by a parameter \( \delta_k := 1/k \), that is convex-valued,
uniformly bounded, and Lipschitz continuous with respect to the
Hausdorff--Pompeiu distance. It is defined by
\[
F_k(x) :=
\begin{cases}
\{ u(x) \}, & \text{if } \|v(x)\| \ge \delta_k, \\[8pt]
(1 - \alpha(x))\,\B + \alpha(x)\{ u(x) \},
& \text{if } 0 < \|v(x)\| < \delta_k, \\[8pt]
N_C(x) \cap \B, & \text{if } x \in C,
\end{cases}
\]
with \( \alpha(x) := \dfrac{\|v(x)\|}{\delta_k} \in (0,1) \).

We associate with this approximation the single-valued selection
defined via the Steiner point,
\[
\psi_k(x) := s\bigl(F_k(x)\bigr), \qquad x \in \R^n,
\]
where \( s \) denotes the Steiner point mapping recalled in~\ref{appendixA}.
For this particular construction, the Steiner selection admits the
explicit representation
\[
\psi_k(x) =
\begin{cases}
u(x), & \text{if } \|v(x)\| \ge \delta_k, \\[10pt]
\alpha(x)\,u(x), & \text{if } 0 < \|v(x)\| < \delta_k, \\[10pt]
s\bigl(N_C(x) \cap \B\bigr), & \text{if } x \in C.
\end{cases}
\]
In particular, \( \psi_k(x) \in F_k(x) \) for all \( x \in \R^n \).
Moreover, \( \psi_k \) is globally defined, uniformly bounded by \(1\),
and Lipschitz continuous.

We now consider a special case of the previous construction by taking the distance function to the origin,
\( \varphi(x) = \|x\| \), which corresponds to the case where the set \( C = \{0\} \). The structure of the approximation follows identically, with \( v(x) = x \), \( u(x) = \dfrac{x}{\|x\|} \), and \( \alpha(x) = \dfrac{\|x\|}{\delta_k} \).

We refer to~\ref{appendixB} for the detailed construction of the approximating mappings \( F_k \) and further discussion of the selection \( \psi_k \), including alternative explicit realizations available in this particular geometric setting.
\end{example}
\subsection{Stability Framework and Hamiltonian Analysis}
\label{sec:stability-framework}
Given a maximally  monotone operator $A$ and a Lipschitz continuous function $f$ defined on $\cl(\dom A)\subseteq \R^n$,  
we consider again  the inclusion given by \eqref{eq:prob}. From the expression $\eqref{operator}$ with $C:= \cl(\dom A)$, system \eqref{eq:prob} is given by
\[ 
\left\{\begin{array}{ll}\dot{x}(t) \in f(x(t)) - \clco A_0 (x(t)) - N_{C}(x(t))
& \textrm{ a.e. }\,\, t\in [0,+\infty) \\
x(0)=x_0 \in C.
\end{array}\right.
\]
For $x_0 \in C$, it is known that there exists a unique  absolutely continuous function  $x(\cdot): [0,+\infty) \to \R^n$ such that $x(\cdot)$ satisfies \eqref{eq:prob} (see \cite{brezis71,barbu}).

In this section, we present the stability definitions needed for developing the main results of this paper. We begin by the Lyapunov stability theory for system \eqref{eq:prob}. We denote by $x(t)$ the solution of  \eqref{eq:prob}. Moreover, let $\E$  be the set of equilibrium points associated to \eqref{eq:prob} given by 
\[
\E := \{ \bar{x} \in C: f(\bar{x}) \in \clco A_0 (\bar{x}) + N_C (\bar{x}) \}.
\]
\begin{definition}[Lyapunov stability and asymptotic stability]
\label{stability}
\begin{enumerate}
\item An equilibrium point $\bar{x} \in \E$ is \emph{Lyapunov stable} if for every $\varepsilon > 0$ there exists $\delta = \delta (\varepsilon) > 0$ such that for $\Vert x_0 - \bar{x} \Vert \leq \delta$, the solution $x(t)$ of \eqref{eq:prob} with $x(0) = x_0$ satisfies, for all $t \geq 0$, $\Vert x(t) - \bar{x} \Vert < \varepsilon$.
\item  An equilibrium point $\bar{x} \in \E$ is \emph{asymptotically stable $(\mathbf{AS})$}
if it is Lyapunov stable and attractive, i.e., there exists $\delta>0$ such that
for every $x_0$ with $\|x_0-\bar{x}\|<\delta$, the corresponding solution $x(t)$
satisfies $\lim_{t\to+\infty} x(t)=\bar{x}$.
\end{enumerate}    
\end{definition}
\begin{definition}[Pointwise asymptotic stable]
\label{PAS}
A set $\mathcal{Z}\subseteq C$ is said to be \emph{pointwise asymptotically stable
$(\mathbf{PAS})$} for \eqref{eq:prob} if
\begin{enumerate}
\item[($\mathcal{A}_1$)] every $z\in\mathcal{Z}$ is Lyapunov stable;
\item[($\mathcal{A}_2$)] there exists $\delta>0$ such that for every initial condition
$x_0\in C$ satisfying $d(x_0,\mathcal{Z})<\delta$, the corresponding solution $x(t)$
of \eqref{eq:prob} is convergent and satisfies
\[
\lim_{t\to+\infty} x(t)\in\mathcal{Z}.
\]
\end{enumerate}

\end{definition}
In Definition~\ref{PAS}, the set $\mathcal Z$ is arbitrary; however, the condition $(\mathcal A_1)$ restricts pointwise asymptotic stability to those sets whose elements are equilibrium points that are Lyapunov stable.
Pointwise asymptotic stability means that every solution starting sufficiently near $\mathcal Z$ converges, with its limit belonging to $\mathcal Z$. While $\mathbf{PAS}$ coincides with asymptotic stability when $\mathcal Z$ reduces to a single equilibrium, it differs significantly for noncompact
equilibrium sets. In that case, $\mathbf{PAS}$ should be understood as a \emph{pointwise convergence property} rather than as a distance-based stability of the set: it does not impose uniform convergence toward $\mathcal Z$ nor convergence to a distinguished equilibrium, but only guarantees convergence of each trajectory to some Lyapunov stable point in $\mathcal Z$, possibly depending on the initial condition. This behavior is typical of systems with nonisolated equilibria and motivates the use of $\mathbf{PAS}$ and semistability instead of classical set stability
notions.
\begin{definition}[Semistability]
An equilibrium $\bar{x} \in \mathcal{E}$ is said to be \emph{semistable  $(\mathbf{SS})$} if it satisfies ($\mathcal{A}_1$) and ($\mathcal{A}_2$) for $\mathcal{Z} = \mathcal{E}$.
\end{definition}
It is evident that for an equilibrium, asymptotic stability leads to semistability, which in turn leads to Lyapunov stability. It is worth noting that semistability is not equivalent to asymptotic stability of the equilibrium set, since a trajectory may approach the set without converging to a specific equilibrium point, especially when the set is noncompact.
\begin{example}
\begin{enumerate}
\item If $\mathcal{Z}$ is a singleton or, more general, a compact set, then $\mathbf{PAS}$  implies $\mathbf{AS}$.
\item
Consider the steepest descent dynamics
\[
\dot{x}(t) \in -\partial \varphi(x(t)),    
\]
where $\varphi$ is a convex function. Then $\mathcal{Z} = \argmin \varphi$ (the set of equilibrium points).
\end{enumerate}
\end{example}
As illustrated in Definition \ref{stability}, the requirement for establishing the system's stability involves having an explicit solution to the system. To address this challenge, we turn to the non-direct Lyapunov method. The core principle of Lyapunov's original idea for verifying the stability of a dynamical system entails the search for an associated nonnegative real-valued function. Indeed, a lower semicontinuous function $V: C \to \R\cup\{+\infty\}$ is called a Lyapunov function for system \eqref{eq:prob} if for every $x_0 \in \dom V$, the function $V(x(t))$ is non-increasing as a function of $t$. Observe that, $\dom V$ denotes the effective domain of the extended-real-valued  function $V$ defined on $C$ i.e. 
\[
\dom V = \big\{ x\in C: V(x) < +\infty \big\}.
\]
Let $V\in \F(\R^n)$ and let $W \in \F^+(\R^n)$ defined on $C$. We define the \emph{Lyapunov pair} $(V,W)$ for the system \eqref{eq:prob} if, for all $x\in C$ and all $t\geq 0$, 
\begin{align}\label{lyapair}
V(x(t)) + \displaystyle\int_0^t W(x(s)) \leq V(x_0).
\end{align}
It is important to recall that, if $W(x(t))$ is nonnegative and globally Lipschitz on $\R^+$ with $\displaystyle\int_0^t W(x(\tau)) d\tau$ is bounded for $t\geq 0$, then $W(x(t))$ goes to $0$ as $t \to +\infty$.

It is evident that $V$ qualifies as a Lyapunov function if and only if the pair $(V, 0)$ meets the criteria for being a Lyapunov pair. In this case, the system $\big(V,f-\clco A_0-N_C\big)$ is said to be \emph{decreasing}.

In studying the stability of dynamical systems, invariance theory is a key concept, alongside the Lyapunov method. 
\begin{definition}
    Given a closed set $S$. We say that the pair $\big(S,f-\clco A_0 -N_C\big)$ is  \emph{invariant} if for all $x_0 \in S \cap C$, the  trajectory $x(\cdot)$ starting from $x(0) = x_0$, which is uniquely defined on the interval $[0,\infty)$ remains within the set $S$  for all $t \geq 0.$
\end{definition}
We aim to study the behavior of trajectories of \eqref{eq:prob} as they approach a closed set \( S \subseteq \R^n \). Given a solution \( x(t) \) and a point \( z \in \proj_S(x(t)) \), the inequality
\[
\langle f(x(t)) - g(t) - \eta(t),\ x(t) - z \rangle \leq 0
\]
with \( g(t) \in\clco A_0(x(t)) \) and \( \eta(t) \in N_C(x(t)) \) indicates that the velocity points toward the set \( S \).

Although this condition provides geometric intuition, the absence of a Lipschitz selection from \(\clco A_0 \) limits direct analysis. To address this, we consider the regularized inclusion
\[
\dot{x}(t) \in f(x(t)) - F_k(x(t)) - N_C(x(t)),
\]
where \( F_k \) is a Lipschitz outer approximation of \( \clco A_0 \),
constructed under the boundedness assumption~\eqref{eq:uniform-boundedness} via Theorem~\ref{smirnov}. The corresponding Steiner selection $\psi_k(x) := s\bigl(F_k(x)\bigr)$
is well defined, uniformly bounded, and locally Lipschitz continuous.

This regularization allows us to apply Lyapunov and invariance principles. In particular, the pair \( (V, f -\clco A_0 - N_C) \) is decreasing if and only if the epigraph \( \operatorname{epi} V \) is invariant under the extended dynamic
\[
(\dot{x}(t), \dot{y}(t)) \in (f(x(t)) -\clco A_0(x(t)) - N_C(x(t)),\ 0).
\]
When \( V \) is the indicator function of a set \( S \), this corresponds to the classical notion of set invariance.

The use of the regularization based on \( F_k \) facilitates the analysis of Lyapunov stability and invariance, while ensuring the preservation of the essential structural features of the original dynamics.

Throughout this section, the regularized systems are introduced solely as auxiliary analytical tools. Their role is to establish Hamiltonian and Lyapunov inequalities that can be transferred directly to the original differential inclusion, from which the stability properties are deduced.

The next lemma, as stated in Lemma \ref{Thibault},  is a direct consequence of \cite[Theorem~3.1]{Thibault2013} when applied to the case where the set $C$ is convex instead of $r$-prox-regular. It provides useful  information about the velocity $\dot{x}(t)$ of the solution of \eqref{eq:prob}. The proof follows the results in \cite{Thibault2013}, with some differences specific to our case that we will highlight in the key steps.
\begin{lemma}
\label{Thibault}
Suppose that 
\eqref{eq:uniform-boundedness} holds and there exist \( \rho, M_f > 0 \) such that, for all \( x \in \B (x_0, \rho) \),
\begin{equation}
\label{bound}
\|f(x)\| \leq M_f.    
\end{equation}
Let $x(t)$ be the solution of \eqref{eq:prob}. Then there exist functions \( g(x(t)) \in \clco A_0(x(t)) \) and \( \eta(t) \in N_C(x(t)) \) such that
\[
\dot{x}(t) = f(x(t)) - g(x(t)) - \eta(t) \text{~~and~~} \|\eta(t)\| \leq \|f(x(t)) - g(x(t))\|
\]
for almost every $t \geq 0$. Moreover, the solution \( x(t) \) is Lipschitz continuous.
\end{lemma}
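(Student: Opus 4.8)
The plan is to follow the strategy of \cite[Theorem~3.1]{Thibault2013}, specializing the prox-regular estimates to the convex set $C=\cl(\dom A)$, and to isolate the single place where convexity genuinely simplifies matters. First I would recall from the existence theory for \eqref{eq:prob} (see \cite{brezis71,barbu}) that the solution $x(\cdot)$ is absolutely continuous and satisfies $x(t)\in C$ for every $t\ge 0$; in particular $\dot x(t)$ exists for almost every $t$. Rewriting \eqref{eq:prob} through the decomposition \eqref{operator}, there are measurable selections $g(t)\in\clco A_0(x(t))$ and $\eta(t)\in N_C(x(t))$ with
\[
\dot x(t)=f(x(t))-g(t)-\eta(t)\quad\text{a.e.}
\]
Writing $w(t):=f(x(t))-g(t)$, so that $\eta(t)=w(t)-\dot x(t)$, the target estimate is exactly $\|\eta(t)\|\le\|w(t)\|$.

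The key step, and the point where convexity of $C$ enters, is the orthogonality relation $\langle \eta(t),\dot x(t)\rangle=0$ for a.e.\ $t$. Since $C$ is convex and $\eta(t)\in N_C(x(t))$, we have $\langle \eta(t),y-x(t)\rangle\le 0$ for every $y\in C$. Taking $y=x(t+h)\in C$ and dividing by $h$, I would let $h\to 0^+$ and $h\to 0^-$ separately at a point of differentiability of $x(\cdot)$: the right quotient yields $\langle\eta(t),\dot x(t)\rangle\le 0$, while the left quotient (after the sign flip coming from $h<0$) yields $\langle\eta(t),\dot x(t)\rangle\ge 0$, hence equality. This is precisely where the $r$-prox-regular argument of \cite{Thibault2013} simplifies: there the analogous inequality carries a quadratic correction of order $\tfrac{1}{2r}\|x(t+h)-x(t)\|^2$, which vanishes after division by $h$ as $h\to0$; for convex $C$ this term is absent from the start. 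With the orthogonality in hand I compute
\[
\|\eta(t)\|^2=\langle\eta(t),\eta(t)\rangle=\langle\eta(t),w(t)-\dot x(t)\rangle=\langle\eta(t),w(t)\rangle\le\|\eta(t)\|\,\|w(t)\|,
\]
so that $\|\eta(t)\|\le\|w(t)\|=\|f(x(t))-g(t)\|$, as claimed.

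For the Lipschitz continuity, combining the estimate with the boundedness hypotheses gives $\|\dot x(t)\|\le\|w(t)\|+\|\eta(t)\|\le 2\|w(t)\|\le 2\bigl(\|f(x(t))\|+\|g(t)\|\bigr)$. By \eqref{eq:uniform-boundedness} we have $\|g(t)\|\le b$, and by \eqref{bound} we have $\|f(x(t))\|\le M_f$ as long as $x(t)\in\B(x_0,\rho)$, whence $\|\dot x(t)\|\le 2(M_f+b)$ over that range. A short-time argument then shows $x(\cdot)$ is Lipschitz with this constant on an interval $[0,T]$ on which the trajectory remains in $\B(x_0,\rho)$ (and globally if the bounds hold globally). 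The main obstacle I anticipate is making the two-sided difference-quotient step fully rigorous—in particular justifying the passage to the limit at almost every $t$ and confirming that no prox-regular correction survives in the convex case—since everything downstream is elementary algebra together with the boundedness bookkeeping.
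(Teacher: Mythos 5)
Your proof is correct, but it takes a genuinely different and more elementary route than the paper's. The paper does not argue directly on the given solution of \eqref{eq:prob}: it introduces the penalized system $(\mathrm{P}_\lambda)$ driven by the Lipschitz selection $\psi_k(x)=\proj_{F_k(x)}(0)$ of the approximation $F_k$ from Theorem~\ref{smirnov}, proves a $\lambda$-independent velocity bound, extracts a uniform limit by the Arzel\`a--Ascoli theorem, identifies the limiting velocity using upper semicontinuity of $\clco A_0$ and graph-closedness of $N_C$, and only then obtains $\|\eta(t)\|\le\|f(x(t))-g(x(t))\|$ via Thibault's difference-quotient argument applied to $\Omega(t)=x(t)-\Phi(t)+\vartheta(t)$. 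You instead take the solution as given by classical existence theory, read the decomposition \eqref{operator} pointwise (note the inclusion $\dot x(t)\in f(x(t))-A(x(t))$ at a point of differentiability already forces $A(x(t))\neq\varnothing$, i.e.\ $x(t)\in\dom A$, so \eqref{operator} does apply a.e.), and derive the estimate from the orthogonality $\langle\eta(t),\dot x(t)\rangle=0$, obtained by the classical two-sided difference-quotient argument for a trajectory lying in the convex set $C$; Cauchy--Schwarz then finishes. Your route buys several things: it needs no approximation or compactness machinery; the estimate holds for \emph{every} pointwise decomposition $\dot x=f-g-\eta$, not only the one produced by a limiting procedure; and it avoids a delicate point in the paper's own proof, namely that the Arzel\`a--Ascoli limit is a priori a trajectory of the $F_k$-regularized inclusion (up to the $\varepsilon$-error), so identifying it with the given solution $x(t)$ of \eqref{eq:prob} implicitly requires a uniqueness argument the paper never spells out. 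What the paper's route buys is coherence with its overall framework: the $F_k/\psi_k$ regularization is the paper's central tool, and the penalization proof re-derives existence in the exact form reused in the stability results. Two minor points on your write-up: if the selections $g(\cdot),\eta(\cdot)$ are to be integrated later, their measurability deserves a one-line appeal to a standard measurable selection theorem rather than being asserted; and your own orthogonality gives the paper's sharper Lipschitz constant $M_f+b$ for free, since $\|\dot x(t)\|^2=\langle\dot x(t),w(t)-\eta(t)\rangle=\langle\dot x(t),w(t)\rangle\le\|\dot x(t)\|\,\|w(t)\|$, so the factor $2$ in your bound is unnecessary.
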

\begin{proof}
The argument proceeds in three main steps. First, we introduce a regularized dynamics based on a fixed Lipschitz approximation of $\clco A_0$, which ensures existence, uniqueness, and uniform Lipschitz bounds for the associated trajectories. Second, we establish compactness and pass to the limit in the regularized system, relying on uniform estimates and the outer semicontinuity of $\clco A_0$ and $N_C$, to recover a decomposition of the velocity of the original solution. Finally, we derive the key estimate on the normal component, which yields Lipschitz regularity of the solution and the stated velocity bound.

We aim to analyze the structure of the velocity \( \dot{x}(t) \) of the solution of problem \eqref{eq:prob}, given by
\[ 
\dot{x}(t) \in f(x(t)) - \clco A_0(x(t)) - N_C(x(t)),
\]
by studying a regularized system associated with the Lipschitz outer
approximation provided by Theorem~\ref{smirnov}. The corresponding single-valued selection
\[
\psi_k(x) := s\bigl(F_k(x)\bigr),
\]
defined via the Steiner point and introduced in
Section~\ref{sec:selection}, enjoys key regularity properties: it is
uniformly bounded by \( b \) and locally Lipschitz continuous on
\( \R^n \).

Fix \( \varepsilon > 0 \). We choose a corresponding index \( k \) (fixed throughout) such that, for all $x \in \R^n$,
\[ 
F_k(x) \subseteq \clco A_0(x) + \varepsilon \B,
\]
where each \( F_k(x) \) is nonempty, compact, convex, and uniformly bounded by \( b \B \).

Let \( T > 0 \) be arbitrary. Consider the regularized system
\begin{align}\label{eq:prob_lambda} 
\left\{\begin{array}{ll}\dot{x}_\lambda(t) &= f(x_\lambda(t)) - \psi_k(x_\lambda(t)) - \frac{1}{2\lambda} \nabla \mathbf{d}^2(x_\lambda(t); C), \\
 x_\lambda(0) &= x_0 \in C.
\end{array}\right.
\tag{\( \text{P}_\lambda \)}
\end{align}
Since all terms on the right-hand side are locally Lipschitz, this system admits a unique absolutely continuous solution \( x_\lambda(\cdot) \) on \( [0, T] \). It is well known that 
\begin{itemize}
    \item $\mathbf{d}(x;C) = \Vert x - \proj_C (x) \Vert $
    \item $\nabla \mathbf{d}^2(x;C) = 2\big( x - \proj_C (x)\big);$ and 
  \item the mapping $\proj_C (\cdot)$ is Lipschitz continuous of rank $1$. 
\end{itemize}
Now, for $t \in [0,T]$, from \eqref{eq:prob_lambda} we have 
\[ 
\dot{x}_\lambda(t) = f(x_\lambda(t)) - \psi_k(x_\lambda(t)) - \frac{1}{\lambda} (x_\lambda(t) - \proj_C(x_\lambda(t))).
\]
Hence,
\[ 
\left\| \dot{x}_\lambda(t) - f(x_\lambda(t)) + \psi_k(x_\lambda(t)) \right\| = \displaystyle{\frac{1}{\lambda} \Vert x_\lambda (t) - \proj_C (x_\lambda (t))\Vert} = \frac{1}{\lambda} \mathbf{d}(x_\lambda(t); C).
\]
Define \( \beta := M_f + b \). Since both \( f \) and \( \psi_k \) are uniformly bounded on bounded subsets, we apply an estimate of the same type as in \cite[Lemma 3.1]{Thibault2013} to get
\[ 
\left\| \dot{x}_\lambda(t) - f(x_\lambda(t)) + \psi_k(x_\lambda(t)) \right\| \leq \beta(1 - e^{-t/\lambda}) \leq \beta.
\]
Consequently, we have
\[ 
\|\dot{x}_\lambda(t)\|  \leq 2\beta.
\]
Therefore,  for each $\lambda >0$, the family \( (x_\lambda) \) is Lipschitz continuous on \( [0, T] \), with a Lipschitz constant independent of \( \lambda \). Taking  a sequence \( (\lambda_n) \), with \( \lambda_n \downarrow 0 \), then the   corresponding sequence \( (x_{\lambda_n}) \) is uniformly Lipschitz  and by the Arzel\`{a}--Ascoli theorem, it admits a uniformly convergent subsequence (do not relable) on \( [0, T] \).  We denote its limit by \( x(\cdot) \), so that \( x_{\lambda_n} \to x \) uniformly on \( [0,T] \), with \( x \in C([0,T]; \R^n) \).

We now examine the limit behavior of the velocities. Since \( F_k(x_{\lambda_n}(t)) \subseteq \clco A_0(x_{\lambda_n}(t)) + \varepsilon \B \),  
for each \( n \in \mathbb{N} \), there exists a point \( \tilde{g}_{\lambda_n}(t) \in \clco A_0(x_{\lambda_n}(t)) \) such that
\[
\|\psi_k(x_{\lambda_n}(t)) - \tilde{g}_{\lambda_n}(t)\| \leq \varepsilon.
\]
The sequences \( (f(x_{\lambda_n}(t))) \), \( (\psi_k(x_{\lambda_n}(t))) \), and \( (\tilde{g}_{\lambda_n}(t)) \) are uniformly bounded,  so by extracting a subsequence if necessary, we may assume that \( \tilde{g}_{\lambda_n}(t) \to g(x(t)) \in \R^n \) almost everywhere. By the outer semicontinuity of \( \clco A_0 \), it follows that
\[
g(x(t)) \in \clco A_0(x(t)).
\]
Next, define
\[
\eta_{\lambda_n}(t) := \frac{1}{\lambda_n} (x_{\lambda_n}(t) - \proj_C(x_{\lambda_n}(t))) \in N_C(x_{\lambda_n}(t)),
\]
so that
\[
\dot{x}_{\lambda_n}(t) = f(x_{\lambda_n}(t)) - \psi_k(x_{\lambda_n}(t)) - \eta_{\lambda_n}(t).
\]
Passing to the limit and using the closedness of the graph of \( N_C \), we obtain
\[
\dot{x}(t) = f(x(t)) - g(x(t)) - \eta(t),
\]
with \( \eta(t) \in N_C(x(t)) \) and \( g(x(t)) \in \clco A_0(x(t)) \).

To estimate \( \|\eta(t)\| \), define
\[
\Phi(t) := f(x(t)), \quad \vartheta(t) := g(x(t)).
\]
Since \( -\dfrac{\eta(t)}{\|\eta(t)\|} \in N_C(x(t)) \), for all \( s < t \) and \( x(s) \in C \),
\[
\left\langle -\frac{\eta(t)}{\|\eta(t)\|}, x(s) - x(t) \right\rangle \leq 0.
\]
Defining \( \Omega(t) := x(t) - \Phi(t) + \vartheta(t) \), and applying the argument from \cite[Theorem 3.1]{Thibault2013}, we deduce
\[
\left\langle \frac{\eta(t)}{\|\eta(t)\|}, \frac{\Omega(t) - \Omega(s)}{t - s} \right\rangle
\leq \left\langle -\frac{\eta(t)}{\|\eta(t)\|}, \frac{\Phi(t) - \Phi(s)}{t - s} - \frac{\vartheta(t) - \vartheta(s)}{t - s} \right\rangle.
\]
Letting \( s \to t \), this yields
\[
\|\eta(t)\| \leq \|f(x(t)) - g(x(t))\|.
\]
Finally, using the bounds \( \|f(x(t))\| \leq M_f \) and \( \|g(x(t))\| \leq b \), we obtain
\[
\|\dot{x}(t)\| \leq M_f + b.
\]
Thus, the limit trajectory \( x(\cdot) \) is Lipschitz continuous on \( [0, T] \) with Lipschitz constant at most \( M_f + b \).
\end{proof}
The upcoming theorem holds significant importance. It is worth highlighting that the methodology employed to establish Theorem \ref{t:invariance} differs from the approaches outlined in the references \cite{clarke2}  and \cite{adly2018invariant}.  Our proof, in essence, relies on a geometric approach, including elements of proximal analysis and based on the properties of the dynamics \eqref{eq:prob}. We define the corresponding \emph{lower Hamiltonian} to the dynamic \eqref{eq:prob}  by 
    \begin{equation}
        \label{eq:hamiltonian}
              \boldsymbol{h}\big(x(t),\zeta):= \inf_{g(x(t)) \in \clco A_0(x(t))}\,\,\inf_{\eta(t) \in N_C (x(t))} \big\langle \zeta, f(x(t)) - g(x(t)) - \eta(t) \big\rangle.
    \end{equation}
For simplicity, we use $g$ and $\eta$ instead of $g(x(t))$ and $\eta(t)$.

To handle the nonsmooth nature of the original dynamics \eqref{eq:prob} and facilitate the establishment of our main result, the analysis focuses on controlling the lower Hamiltonian associated with \( \clco A_0 \), rather than on approximating individual trajectories. This control is achieved through the Lipschitz outer approximations \( F_k \) constructed in Theorem~\ref{smirnov}. The following proposition formalizes this Hamiltonian approximation property and constitutes the key mechanism by which stability estimates obtained for the regularized systems are transferred to the original differential inclusion.
\begin{remark} \label{rem:Hamilton}
In this work, the term \emph{Hamiltonian} refers to a Lyapunov-like function used to analyze the energy dissipation or decrease along trajectories. Unlike the classical Hamiltonian in conservative systems (which is typically conserved), here it plays a variational role and may strictly decrease. This terminology is used in a generalized sense, consistent with frameworks involving nonsmooth or monotone differential inclusions.
\end{remark}
\begin{proposition}[Approximation of the Hamiltonian]
\label{prop:hamiltonian-approx}
Let \((F_k)_{k\in\mathbb{N}}\) be the Lipschitz approximations of
\(\clco A_0\) provided by Theorem~\ref{smirnov}. For each \(k\), define the approximate lower Hamiltonian by
\[
\boldsymbol{h}_k(x,\zeta)
:= \min_{g \in F_k(x)} \inf_{\eta \in N_C(x)}
\left\langle \zeta, f(x) - g - \eta \right\rangle .
\]
Let \(\boldsymbol{h}\) denote the lower Hamiltonian associated with \(\clco A_0\),
as defined in \eqref{eq:hamiltonian}. Then, for every \(x \in C \), every \(\zeta \in \R^n\), and every \(\varepsilon>0\), there exists an integer \(k(\varepsilon,x)\) such that, for all \(k \ge k(\varepsilon,x)\),
\begin{equation}
\label{approx}
\boldsymbol{h}(x,\zeta)
\le \boldsymbol{h}_k(x,\zeta) + \varepsilon \|\zeta\|.
\end{equation}
\end{proposition}
\begin{proof}
Fix \( x \in C \) and \( \zeta \in \R^n \). Let \( g_k \in F_k(x) \) such that 
\[
g_k \in \argmin_{g \in F_k(x)} \inf_{\eta \in N_C(x)} \left\langle \zeta, f(x) - g - \eta \right\rangle.
\]
Thus,
\[
\boldsymbol{h}_k(x,\zeta) = \inf_{\eta \in N_C(x)} \left\langle \zeta, f(x) - g_k - \eta \right\rangle.
\]
Since \( F_k(x) \subseteq \clco A_0(x) + \varepsilon \B \), there exists \( \tilde{g} \in \clco A_0(x) \) such that
\[
\|g_k - \tilde{g}\| \leq \varepsilon.
\]
Then for every \( \eta \in N_C(x) \),
\[
\left\langle \zeta, f(x) - \tilde{g} - \eta \right\rangle = \left\langle \zeta, f(x) - g_k - \eta \right\rangle + \left\langle \zeta, g_k - \tilde{g} \right\rangle,
\]
and by the Cauchy--Schwarz inequality,
\[
|\langle \zeta, g_k - \tilde{g} \rangle| \leq \|\zeta\| \|g_k - \tilde{g}\| \leq \varepsilon \|\zeta\|.
\]
Thus, for every \( \eta \in N_C(x) \),
\[
\left\langle \zeta, f(x) - \tilde{g} - \eta \right\rangle \leq \left\langle \zeta, f(x) - g_k - \eta \right\rangle + \varepsilon \|\zeta\|.
\]
Taking the infimum over \( \eta \in N_C(x) \), we get
\[
\inf_{\eta \in N_C(x)} \left\langle \zeta, f(x) - \tilde{g} - \eta \right\rangle \leq \boldsymbol{h}_k(x,\zeta) + \varepsilon \|\zeta\|.
\]
Since \( \tilde{g} \in \clco A_0(x) \), it follows that
\[
\boldsymbol{h}\big(x,\zeta) \leq \inf_{\eta \in N_C(x)} \left\langle \zeta, f(x) - \tilde{g} - \eta \right\rangle,
\]
thus,
\[
\boldsymbol{h}\big(x,\zeta) \leq \boldsymbol{h}_k(x,\zeta) + \varepsilon \|\zeta\|,
\]
which proves the claim.
\end{proof}
\section{Proofs of the Main Results}
\label{sec:proofs-main-results}
In this section, we provide the proofs of the results stated in Section~\ref{sec:main-results}. The arguments rely on the Hamiltonian framework and approximation tools developed in Section~\ref{sec:stability-framework}. We also recall a technical variational result from~\cite{radulescu1997geometric}, which is used in the proof of Theorem~\ref{t:invariance} to handle horizontal proximal normals to $\epi V$.
\begin{theorem}[{\cite[Theorem~2.4]{radulescu1997geometric}}]
\label{Clarke+Radu}
Let $V: \R^n \to \Rb$ be a lower semicontinuous extended-real-valued function, $x\in \dom V$ and $(\nu^*,0) \in N_{\epi V}^P (x,V(x))$ with $\nu^* \neq 0$. Then, for any $\vp > 0$, there exists $\bar{x} \in \ball (y,\vp) \cap \dom V$ with $\vert V(\bar{x})- V(x)\vert < \vp, \mu \in (0,\vp),$ and $\xi \in \ball (\nu^*,\vp)$ such that $(\xi,-\mu) \in  N_{\epi V}^P (\bar{x},V(\bar{x})).$
\end{theorem}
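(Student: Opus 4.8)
The plan is to argue geometrically, through the projection characterization of proximal normals to the closed set $\epi V$. The first step is to realize the horizontal normal $(\nu^*,0)$ as a genuine nearest-point direction. Writing the proximal inequality behind $(\nu^*,0)\in N_{\epi V}^P(x,V(x))$ with constant $\sigma\ge 0$, a completion-of-squares computation on $\|(y,\beta)-P_t\|^2$, where $P_t:=(x+t\nu^*,V(x))=(x,V(x))+t(\nu^*,0)$, shows that for every $t\in(0,1/(2\sigma))$ the pair $(x,V(x))$ is the \emph{unique} closest point of $\epi V$ to $P_t$, with $\mathbf{d}(P_t,\epi V)=t\|\nu^*\|$. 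In particular any closest point lies within $2t\|\nu^*\|$ of $(x,V(x))$, so fixing $t$ small (also $t<\varepsilon/(4\|\nu^*\|)$) keeps the whole construction inside the prescribed $\varepsilon$-ball.

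Next I would perturb the external point downward. For small $\rho>0$ set $P_{t,\rho}:=(x+t\nu^*,V(x)-\rho)$ and choose a closest point $(\bar x_\rho,\bar\beta_\rho)\in\proj_{\epi V}(P_{t,\rho})$, which exists since $\epi V$ is closed. By definition of the proximal normal cone, the residual $(\xi_\rho,-\mu_\rho):=P_{t,\rho}-(\bar x_\rho,\bar\beta_\rho)\in N_{\epi V}^P(\bar x_\rho,\bar\beta_\rho)$, with $\xi_\rho=x+t\nu^*-\bar x_\rho$ and $\mu_\rho=\bar\beta_\rho-(V(x)-\rho)$. Two estimates organize the limit $\rho\to0$: from $(x,V(x))\in\epi V$ one gets $\mathbf{d}(P_{t,\rho},\epi V)\le\sqrt{t^2\|\nu^*\|^2+\rho^2}$, and with the $1$-Lipschitz continuity of $\mathbf{d}(\cdot,\epi V)$ this gives $\mathbf{d}(P_{t,\rho},\epi V)\to t\|\nu^*\|$. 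Outer semicontinuity of the projection together with the uniqueness from the first step then forces $\bar x_\rho\to x$ and $\bar\beta_\rho\to V(x)$, which delivers $\bar x_\rho\in\ball(x,\varepsilon)$; once non-horizontality is established below, vertical optimality of the nearest point gives $\bar\beta_\rho=V(\bar x_\rho)$, whence $|V(\bar x_\rho)-V(x)|<\varepsilon$. Finally, since $N_{\epi V}^P$ is a cone, I rescale the residual by $1/t$: the horizontal part becomes $\nu^*+t^{-1}(x-\bar x_\rho)\to\nu^*$ and the vertical part $\mu_\rho/t\to0$, so for $\rho$ small enough we read off $\xi\in\ball(\nu^*,\varepsilon)$ and $\mu=\mu_\rho/t\in(0,\varepsilon)$, with $(\xi,-\mu)\in N_{\epi V}^P(\bar x_\rho,V(\bar x_\rho))$, exactly as required.

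The one nontrivial point — and the step I expect to be the main obstacle — is showing that the normal produced is genuinely \emph{non-horizontal}, that is $\mu_\rho>0$ rather than merely $\mu_\rho\ge0$. The inequality $\mu_\rho\ge0$ is easy: if $\bar\beta_\rho<V(x)-\rho$, then $V(\bar x_\rho)\le\bar\beta_\rho<V(x)-\rho$ would place $(\bar x_\rho,V(x)-\rho)$ in $\epi V$ strictly closer to $P_{t,\rho}$, contradicting optimality. The difficulty is excluding the borderline case $\mu_\rho=0$, in which the closest point sits exactly at height $V(x)-\rho$ and the residual degenerates to the horizontal vector $(\xi_\rho,0)$. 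Ruling this out uses the lower semicontinuity of $V$ in an essential way: it guarantees that the sublevel sets $\{\,V\le V(x)-\rho\,\}$ recede from $x$ as $\rho\to0$, which, played against the strict unique-nearest-point inequality of the first step, prevents the projection of the lowered point from remaining on the horizontal level and forces it strictly above $V(x)-\rho$. This is precisely the delicate horizontal-to-nonhorizontal passage carried out in \cite{radulescu1997geometric}, and it is the crux of the argument; the remaining bookkeeping (collecting the $\varepsilon$-estimates and performing the rescaling) is routine.
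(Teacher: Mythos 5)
The paper itself does not prove this statement; it imports it verbatim as \cite[Theorem~2.4]{radulescu1997geometric}. So your proposal can only be judged on its own merits, and there it has a genuine gap, located exactly at the step you flag as the crux. Your skeleton is sound: the completion-of-squares argument showing $(x,V(x))$ is the unique nearest point to $P_t$, the inequality $\mu_\rho\ge 0$, the identity $\bar\beta_\rho=V(\bar x_\rho)$ once $\mu_\rho>0$, and the rescaling are all correct. But the claim that $\mu_\rho>0$ holds ``for $\rho$ small enough'' is false, and the mechanism you offer for it cannot work. Lower semicontinuity does not make the sublevel sets recede in any quantitative sense: as $\rho\downarrow 0$ the sets $\{V\le V(x)-\rho\}$ \emph{grow}, and lsc only guarantees $\mathbf{d}\big(x,\{V\le V(x)-\rho\}\big)>0$ for each fixed $\rho$; this distance may be $o(\rho)$ (e.g.\ of order $\rho^2$), which is precisely what allows the projection of the lowered point to sit at the horizontal level. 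Concretely, take $n=1$, $V(0)=0$, $V(y)=1$ for $y>0$, and for $y<0$ set $V(y)=-2^{-j}$ where $j$ is the integer with $|y|\in[4^{-j},4^{-j+1})$. This $V$ is lsc, and $(1,0)\in N^P_{\epi V}(0,0)$ with proximal constant $\sigma=\tfrac12$. Yet for any fixed $t<1/6$ and every $\rho=2^{-k}$ with $k$ large, a short comparison of distances shows the nearest point of $\epi V$ to $(t,-\rho)$ is the corner $(-4^{-k},-2^{-k})$, so the residual is the horizontal vector $(t+4^{-k},0)$ and $\mu_\rho=0$. Horizontal residuals thus occur for arbitrarily small $\rho$, and your argument breaks at every such $\rho$ (the residual then lives at a point strictly above the graph, which is useless for the theorem).

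What is true, and what a correct completion must prove, is that $\mu_{\rho_j}>0$ for \emph{some} sequence $\rho_j\downarrow 0$, after which your limiting and rescaling argument goes through along that sequence (in the example above, $\rho$ slightly larger than $2^{-k-1}$ works). This requires a real argument, not bookkeeping. One way: assume $\mu_\rho=0$ for all $\rho\in(0,\rho_1)$; comparing nearest points for two levels $\rho'<\rho$ gives both $r(\rho')\le r(\rho)$ and $r(\rho)^2\le r(\rho')^2+(\rho-\rho')^2$ for $r(\rho):=\mathbf{d}(P_{t,\rho},\epi V)$, whence (telescoping over fine partitions) $r$ is constant, equal to its limit $t\|\nu^*\|$; the nearest points $\bar x_\rho$ then lie on the sphere $\|\,\cdot\,-(x+t\nu^*)\|=t\|\nu^*\|$ with $V(\bar x_\rho)\le V(x)-\rho$, and plugging the epigraph point $(\bar x_\rho,V(x))$ into the proximal inequality yields $\langle\nu^*,\bar x_\rho-x\rangle\le 2t\sigma\langle\nu^*,\bar x_\rho-x\rangle$, forcing $\bar x_\rho=x$ and contradicting $V(\bar x_\rho)<V(x)$. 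Nothing of this kind appears in your proposal; instead the crux is deferred to ``the delicate passage carried out in \cite{radulescu1997geometric}'', which is circular, since the statement being proved \emph{is} that paper's theorem. As it stands, the proposal is an honest reduction of the theorem to its hardest step, not a proof of it.
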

\subsection{Proof of Theorem~\ref{t:invariance}}
We will prove that $\big(\epi V, (f-\clco A_0-N_C)\times \{0\}\big)$ is invariant in the following two steps.

\textbf{Step 1:} Assumption~\eqref{eq:H1} is equivalent to say that, for all $\zeta \in \partial_P V(x)$, 
\[
\displaystyle\inf_{g \in \clco A_0(x)}\inf_{\eta \in N_C (x)} \big\langle \zeta, f(x) - g - \eta \big\rangle \leq 0.
\]
The goal is to prove that for all $(\xi,\mu) \in N^P_{\epi V} (x,\alpha)$ there exists $g \in \clco A_0(x)$ and $\eta \in N_C (x)$ such that $\big\langle \xi, f(x)-g - \eta\big\rangle \leq 0.$

First, we will demonstrate that $\mu \leq 0.$ Let $z \in \dom V$ and let $(\nu^*,0) \in N^P_{\epi V} (z,V(z))$ with $\nu^* \neq 0$. Without loss of generality, we assume that $\Vert \nu^* \Vert = 1$. Since $(\nu^*,0) \in N^P_{\epi V} (z,V(z))$ then there exists a point 
    $(x,V(z)) \notin \epi V$ where $(z,V(z))$ is the closest point in $\epi V$ to $(x,V(z))$ i.e. for $s \in [0,1]$,
\begin{align*}
&\big(z,V(z)\big) \in \proj_{\epi V} \big(x,V(z)\big) \\
\iff & \big(z,V(z)\big) \in \proj_{\epi V} \big((z,V(z)) +s(x-z,0)\big)\\
\iff & \big(z,V(z)\big) \in \proj_{\epi V} \big(z +s(x-z),V(z)\big).
\end{align*}
Then, for all $s \in [0,1]$,
\[
(\nu^*,0) \in \partial_P \mathbf{d}\big((z +s(x-z),V(z));\epi V \big). 
\]
Thus, according to Proposition \ref{proxdist}, we can deduce that 
\[
(\nu^*,0) = \left\{ \left(\frac{x-z}{\Vert x-z \Vert},0\right)\right\} \implies \nu^* = \frac{x-z}{\Vert x-z \Vert}.
\]
Now, since $V$ is lower semicontinuous and by the definition of the epigraph of $V$, we have $\mathbf{d}(x,\alpha);\epi V)$ is a nonincreasing function as a function of $\alpha$. Thus, for all $(\bar{x}, V(\bar{z}))$ and all $\vp > 0$,
\begin{align}
\label{eqdist}
    \mathbf{d} \big((\bar{x}, V(\bar{z}));\epi V\big) \leq \mathbf{d}\big((\bar{x}, V(\bar{z})-\vp);\epi V\big). 
\end{align}
Suppose now that the point $(\bar{x}, V(\bar{z}))$ is close to the point $(x,V(z))$, then by \eqref{eqdist} we distinguish the following two different cases. 

\emph{Case 1.1:} $\mathbf{d}\big((\bar{x}, V(\bar{z}));\epi V\big) < \mathbf{d}\big((\bar{x}, V(\bar{z})-\vp); \epi V\big)$.
     Since $(\bar{x}, V(\bar{z}))$ is close to the point $(x,V(z))$ then, according to \cite[Theorem 1.4]{radulescu1997geometric}, there exists $(x,\mu) \in \partial_P \mathbf{d}\big((\bar{x},V(\bar{z}));\epi V\big)$ such that 
     \begin{align*}
     & \big\langle (x,\mu),(\bar{x},V(\bar{z})-\vp) - (\bar{x},V(\bar{z})))\big\rangle > 0 \\  
     \implies & \big\langle (x,\mu),(0,-\vp)\big\rangle > 0\\
     \implies & -\mu \vp > 0\\
     \implies & \mu < 0 \,\,\,\textrm{ (since $\vp > 0$) }. 
     \end{align*}

\emph{Case 1.2:} $\mathbf{d}\big((\bar{x}, V(\bar{z}));\epi V\big) = \mathbf{d}\big((\bar{x}, V(\bar{z})-\vp); \epi V\big)$.
     We have 
     \begin{align*}
     & \langle (x,\mu),(\bar{x},V(\bar{z})-\vp) - (\bar{x},V(\bar{z})))\rangle = 0 \\  
     \implies & \langle (x,\mu),(0,-\vp)\rangle > 0\\
     \implies & \mu \vp = 0\\
     \implies & \mu = 0 \,\,\,\textrm{ (since $\vp > 0$) }. 
     \end{align*}
Thus, we deduce that $\mu \leq 0$.

\textbf{Step 2:} Here we have two cases: 

\emph{Case 2.1:} \( \mu < 0.\)
Since \( (\xi, \mu) \in N^P_{\epi V}(x, \alpha) \) with \( \mu < 0 \), it follows that
\[
-\frac{\xi}{\mu} \in \partial_P V(x).
\]
By assumption~\eqref{eq:H1}, we have
\[
\boldsymbol{h}\left(x, -\frac{\xi}{\mu}\right) \leq 0.
\]
Since the infimum defining \( \boldsymbol{h}\big(x,\zeta) \) may not be attained, we proceed by approximation.

Fix an arbitrary \( \varepsilon > 0 \), and consider the Lipschitz outer approximation \( F_k \) of \( \clco A_0 \) constructed in
Theorem~\ref{smirnov}, corresponding to \( \varepsilon \), satisfying
\[
F_k(x) \subseteq \clco A_0(x) + \varepsilon \B,
\qquad \text{for all } x \in \R^n.
\]
Moreover, the associated Steiner selection \( \psi_k \) is defined by
\[
\psi_k(x) := s\bigl(F_k(x)\bigr),
\]
and satisfies \( \psi_k(x) \in F_k(x) \) for all \( x \in \R^n \).
By the Hamiltonian approximation property~\eqref{approx}, we know that
\[
\boldsymbol{h}\left(x, -\frac{\xi}{\mu}\right) \leq \boldsymbol{h}_k\left(x, -\frac{\xi}{\mu}\right) + \varepsilon \left\| \frac{\xi}{\mu} \right\|,
\]
where
\[
\boldsymbol{h}_k(x,\zeta) := \min_{g \in F_k(x)} \inf_{\eta \in N_C(x)} \langle \zeta, f(x) - g - \eta \rangle.
\]
Since \( \psi_k(x) \in F_k(x) \), and by definition of \( \boldsymbol{h}_k \), it follows that
\[
\inf_{\eta \in N_C(x)} \left\langle -\frac{\xi}{\mu}, f(x) - \psi_k(x) - \eta \right\rangle \geq \boldsymbol{h}_k\left(x, -\frac{\xi}{\mu}\right).
\]
Moreover, since \( \boldsymbol{h}_k\left(x, -\dfrac{\xi}{\mu}\right) \geq -\varepsilon \left\| \dfrac{\xi}{\mu} \right\| \),
we can use standard properties of convex analysis: since \( N_C(x) \) is closed and convex, and the mapping
\[
\eta \mapsto \left\langle -\frac{\xi}{\mu}, f(x) - \psi_k(x) - \eta \right\rangle
\]
is affine (and thus continuous) in \( \eta \), it follows that for every \( \delta > 0 \), there exists \( \eta \in N_C(x) \) such that
\[
\left\langle -\frac{\xi}{\mu}, f(x) - \psi_k(x) - \eta \right\rangle \leq \boldsymbol{h}_k\left(x, -\frac{\xi}{\mu}\right) + \delta.
\]
Applying this with \( \delta = \varepsilon \left\| \frac{\xi}{\mu} \right\| \) yields the existence of \( \eta \in N_C(x) \) such that
\[
\left\langle -\frac{\xi}{\mu}, f(x) - \psi_k(x) - \eta \right\rangle \leq \varepsilon \left\| \frac{\xi}{\mu} \right\|.
\]
Multiplying both sides by \( -\mu > 0 \), we deduce
\[
\left\langle \xi, f(x) - \psi_k(x) - \eta \right\rangle \leq \varepsilon \|\xi\|.
\]
Now, since \( \psi_k(x) \in F_k(x) \subseteq \clco A_0(x) + \varepsilon \B \),  
there exists \( \tilde{g} \in \clco A_0(x) \) such that
\[
\|\psi_k(x) - \tilde{g}\| \leq \varepsilon.
\]
Thus, we can estimate
\[
\begin{aligned}
\left\langle \xi, f(x) - \tilde{g} - \eta \right\rangle
&= \left\langle \xi, f(x) - \psi_k(x) - \eta \right\rangle + \left\langle \xi, \psi_k(x) - \tilde{g} \right\rangle \\
&\leq \varepsilon \|\xi\| + \varepsilon \|\xi\| \\
&= 2\varepsilon \|\xi\|.
\end{aligned}
\]
Since \( \varepsilon > 0 \) was arbitrary, sending \( \varepsilon \to 0 \) yields the existence of \( \tilde{g} \in \clco A_0(x) \) and \( \eta \in N_C(x) \) such that
\[
\left\langle \xi, f(x) - \tilde{g} - \eta \right\rangle \leq 0.
\]
Thus, we conclude the proof.

\emph{Case 2.2:} \( \mu = 0. \)
Since \( (\xi, 0) \in N^P_{\epi V}(x, V(x)) \), by Theorem~\ref{Clarke+Radu}, there exist sequences \( (x_i)_i \subset C \), \( (\xi_i)_i \subset \R^n \), and \( (\theta_i)_i \subset (0,+\infty) \) such that
\[
(\xi_i, -\theta_i) \in N^P_{\epi V}(x_i, V(x_i)),
\quad
\xi_i \to \xi,
\quad
\theta_i \to 0,
\quad
x_i \to x,
\quad
V(x_i) \to V(x).
\] 
For each \( i \), we define
$\zeta_i := -\displaystyle\frac{\xi_i}{\theta_i}$. Then \( \zeta_i \in \partial_P V(x_i) \). Applying assumption~\eqref{eq:H1} at each point \( x_i \), we have
\[
\boldsymbol{h}\big(x_i,\zeta_i) \leq 0,
\]
where the lower Hamiltonian associated with \eqref{eq:prob} is given by
\[
\boldsymbol{h}\big(x_i,\zeta_i) := \inf_{g \in \clco A_0(x_i)} \inf_{\eta \in N_C(x_i)} \left\langle \zeta_i, f(x_i) - g - \eta \right\rangle.
\]
Fix an arbitrary \( \varepsilon > 0 \).
Using the Lipschitz approximation \( F_k \) of \( \clco A_0 \) (Theorem~\ref{smirnov}), we have
\[
\boldsymbol{h}\big(x_i,\zeta_i) \leq \boldsymbol{h}_k(x_i,\zeta_i) + \varepsilon \|\zeta_i\|,
\]
where
\[
\boldsymbol{h}_k(x_i,\zeta_i) := \min_{g \in F_k(x_i)} \inf_{\eta \in N_C(x_i)} \left\langle \zeta_i, f(x_i) - g - \eta \right\rangle.
\]
Thus, for each \( i \),
\[
\boldsymbol{h}_k(x_i,\zeta_i) \geq -\varepsilon \|\zeta_i\|.
\]
By the definition of \( \boldsymbol{h}_k \), there exist points \( g_i \in F_k(x_i) \) and \( \eta_i \in N_C(x_i) \) such that
\[
\left\langle \zeta_i, f(x_i) - g_i - \eta_i \right\rangle \leq \varepsilon \|\zeta_i\|.
\]
Recalling that \( \zeta_i = -\displaystyle\frac{\xi_i}{\theta_i} \),  
multiplying both sides by \( \theta_i > 0 \) yields
\[
\left\langle -\xi_i, f(x_i) - g_i - \eta_i \right\rangle \leq \varepsilon \|\xi_i\|,
\]
or equivalently,
\[
\left\langle \xi_i, f(x_i) - g_i - \eta_i \right\rangle \geq -\varepsilon \|\xi_i\|.
\]
Since \( g_i \in F_k(x_i) \subseteq \clco A_0(x_i) + \varepsilon \B \), 
there exists \( \tilde{g}_i \in \clco A_0(x_i) \) such that
\[
\|g_i - \tilde{g}_i\| \leq \varepsilon.
\]
Thus, 
\begin{align*}
\left\langle \xi_i, f(x_i) - \tilde{g}_i - \eta_i \right\rangle
&= \left\langle \xi_i, f(x_i) - g_i - \eta_i \right\rangle + \left\langle \xi_i, g_i - \tilde{g}_i \right\rangle \\
&\geq -\|\xi_i\| \|f(x_i) - g_i - \eta_i\| - \|\xi_i\| \|g_i - \tilde{g}_i\|\\
&\geq -\varepsilon \|\xi_i\| - \varepsilon \|\xi_i\|  \\
&= -2\varepsilon \|\xi_i\|.
\end{align*}
Moreover, for \( \beta := M_f + b \), using the estimation from Lemma~\ref{Thibault}, we can deduce that at \( x_i \),
\[
\begin{aligned}
\|\eta_i\| 
&\leq \| f(x_i) - \tilde{g}_i \| \\
&\leq \|f(x_i)\| + \|\tilde{g}_i\| \\
&\leq M_f + b = \beta.
\end{aligned}
\]
Therefore, considering the boundedness of \( f(x_i) \), \( \tilde{g}_i \), and \( \eta_i \), 
along with the Lipschitz continuity of \( f \), the local boundedness of \( \clco A_0 \),
and the closedness of \( N_C \),
we can extract subsequences (without relabeling) such that
\[
\tilde{g}_i \to \tilde{g} \in \clco A_0(x) 
\quad \text{and} \quad 
\eta_i \to \eta \in N_C(x).
\]
Passing to the limit as \( i \to +\infty \) in the inequality
\[
\left\langle \xi_i, f(x_i) - \tilde{g}_i - \eta_i \right\rangle \geq -2\varepsilon \|\xi_i\|,
\]
and using the convergences \( x_i \to x \), \( \xi_i \to \xi \), \( \tilde{g}_i \to \tilde{g} \in \clco A_0(x) \), and \( \eta_i \to \eta \in N_C(x) \),
we obtain
\[
\left\langle \xi, f(x) - \tilde{g} - \eta \right\rangle \geq -2\varepsilon \|\xi\|.
\]
Since \( \varepsilon > 0 \) was arbitrary, letting \( \varepsilon \to 0 \) yields
\[
\left\langle \xi, f(x) - \tilde{g} - \eta \right\rangle \geq 0.
\]
Collecting the results of Steps 1 and 2, we deduce that, for every \( (\xi,\mu) \in N^P_{\epi V}(x,V(x)) \),
there exist \( \tilde{g} \in \clco A_0(x) \) and \( \eta \in N_C(x) \) such that
\[
\begin{cases}
\big\langle \xi, f(x) - \tilde{g} - \eta \big\rangle \leq 0 & \text{if } \mu < 0, \\
\big\langle \xi, f(x) - \tilde{g} - \eta \big\rangle \geq 0 & \text{if } \mu = 0.
\end{cases}
\]
In both cases, the invariance condition is satisfied.  
Therefore, \( \big(\epi V, (f - \clco A_0 - N_C) \times \{0\}\big) \) is invariant, and the pair \( (V, f - \clco A_0 - N_C) \) is decreasing.

\ref{t:invariance_main}: Since the pair $\big(V,f-\clco A_0-N_C\big)$ is decreasing, it means that, for every $x_0 \in C$ the trajectory $x(\cdot)$ of \eqref{eq:prob} defined on $[0,\infty]$ and starting from $x(0) = x_0$, we have $V(x(t)) \leq V(x_0)$ for all $t \geq 0$. Now, take $\alpha = V(x_0)$, we can show that $\big([V\leq\alpha]_{\mid{\dom V}} ,f-\clco A_0-N_C\big)$ is invariant. 
\begin{remark}[Geometric interpretation]
\label{rem:horizontal_normals}
The two cases \( \mu < 0 \) and \( \mu = 0 \) correspond to different geometric behaviors relative to the epigraph of \( V \).
\begin{enumerate}
    \item When \( \mu < 0 \), the pair \( (\xi,\mu) \in N^P_{\epi V}(x,V(x)) \) corresponds to a \emph{strict proximal normal}, pointing outward from the epigraph. In this case, the condition
    \[
    \left\langle \xi, f(x) - \tilde{g} - \eta \right\rangle \leq 0
    \]
    ensures that the dynamics is directed inward, or at least non-expanding, relative to the boundary of the epigraph. This guarantees strong invariance.
    
    \item When \( \mu = 0 \), the pair \( (\xi,0) \in N^P_{\epi V}(x,V(x)) \) corresponds to a \emph{horizontal normal}. In this situation, strict inward movement is not required: it is sufficient that the dynamics do not push strictly outward across the boundary. Thus, the inequality
    \[
    \left\langle \xi, f(x) - \tilde{g} - \eta \right\rangle \geq 0
    \]
is fully consistent with the invariance of the epigraph under the dynamics.
\end{enumerate}
\end{remark}
\subsection{Proof of Theorem~\ref{ConvW}}
The proof of the first part closely follows the ideas developed in \cite{clarke2}, adapted to our setting.

Let $(V,W)$ be a Lyapunov pair for the system~\eqref{eq:prob}, satisfying assumption~\eqref{eq:H2}. 
Following a standard regularization technique, we define, for each \( n > 0 \), the infimal convolution
\[
W_n(x) := \inf_{y \in \R^n} \left\{ W(y) + n \|x-y\|^2 \right\}.
\]
It is well known that each \( W_n \) is Lipschitz continuous and that \( W_n(x) \to W(x) \) pointwise as \( n \to +\infty \) (see, e.g., \cite{clarke2}). Moreover, for each \( n \) and \( x \in \R^n \),
\[
W_n(x) > 0 \quad \text{if and only if} \quad W(x) > 0,
\]
thus preserving the positivity structure of \( W \). Furthermore, it is clear that $(V,W_n)$ is a Lyapunov pair for the system \eqref{eq:prob}.

Next, define the augmented system on \( \R^n \times \R \) by
\[
\mathcal{A}(x,y_n) := \left( f(x) - \clco A_0(x) - N_C(x) \right) \times \left\{ W_n(x) \right\}.
\]
We also define the augmented Lyapunov function
\[
\mathcal{V}_n(x,y_n) := V(x) + y_n.
\]
We claim that the pair \( (\mathcal{V}_n, \mathcal{A}) \) is decreasing. Indeed, let \( (x,y_n) \in \R^n \times \R \), and let \( (\zeta, \theta) \in \partial_P \mathcal{V}_n(x,y_n) \). Then, by the sum rule for proximal subdifferentials, we have
\[
\zeta \in \partial_P V(x), \quad \theta = 1.
\]
By assumption~\eqref{eq:H2}, it follows that
\[
\boldsymbol{h}\left(x, \zeta\right) \leq -W(x).
\]
Applying Proposition~\ref{prop:hamiltonian-approx}, we obtain
\[
\boldsymbol{h}\left(x, \zeta\right) \leq \boldsymbol{h}_k(x,\zeta) + \varepsilon \|\zeta\|.
\]
Since \( \boldsymbol{h}_k(x,\zeta) \) is defined via
\[
\boldsymbol{h}_k(x,\zeta) := \min_{g \in F_k(x)} \inf_{\eta \in N_C(x)} \left\langle \zeta, f(x) - g - \eta \right\rangle,
\]
and \( \psi_k(x) \in F_k(x) \), we deduce that there exists \( \eta \in N_C(x) \) such that
\[
\left\langle \zeta, f(x) - \psi_k(x) - \eta \right\rangle \leq \boldsymbol{h}_k(x,\zeta) + \varepsilon \|\zeta\| \leq -W_n(x) + \varepsilon \|\zeta\|.
\]
Moreover, since \( \psi_k(x) \in F_k(x) \subseteq \clco A_0(x) + \varepsilon \B \), there exists \( \tilde{g} \in \clco A_0(x) \) satisfying
\[
\|\psi_k(x) - \tilde{g}\| \leq \varepsilon,
\]
and hence
\begin{align*}
\left\langle \zeta, f(x) - \tilde{g} - \eta \right\rangle
&= \left\langle \zeta, f(x) - \psi_k(x) - \eta \right\rangle + \left\langle \zeta, \psi_k(x) - \tilde{g} \right\rangle \\
&\leq \left( -W_n(x) + \varepsilon \|\zeta\| \right) + \varepsilon \|\zeta\| \\
&= -W_n(x) + 2\varepsilon \|\zeta\|.
\end{align*}
Thus, the Hamiltonian associated to the augmented dynamics satisfies:
\[
\left\langle (\zeta,1), (f(x)-\tilde{g}-\eta, W_n(x)) \right\rangle \leq 2\varepsilon \|\zeta\|.
\]
Letting \( \varepsilon \to 0 \), we deduce the infinitesimal decrease condition for \( (\mathcal{V}_n, \mathcal{A}) \).

Applying the same argument used in the proof of Theorem \ref{t:invariance}, we deduce the existence of a Lipschitz continuous solution \( (x(\cdot),y_n(\cdot)) \) of the system
\[
\begin{cases}
\dot{x}(t) \in f(x(t)) - \clco A_0(x(t)) - N_C(x(t)),\\
\dot{y}_n(t) = W_n(x(t)),
\end{cases}
\]
with initial condition \( (x(0),y_n(0)) = (x_0,0) \), such that
\[
\mathcal{V}_n(x(t), y_n(t)) \leq \mathcal{V}_n(x_0,0) = V(x_0).
\]
Unfolding the definition of \( \mathcal{V}_n \), we obtain that, for all $t \geq 0$,
\[
V(x(t)) + y_n(t) \leq V(x_0).
\]
Since
\[
y_n(t) = \int_0^t W_n(x(s))\, ds,
\]
it follows that
\[
V(x(t)) + \int_0^t W_n(x(s))\, ds \leq V(x_0).
\]
Since \( V \) is bounded below and \( V(x(t)) \geq \inf V \) for all \( t \), we deduce that \( \displaystyle\int_0^{+\infty} W_n(x(s))\, ds \) is finite. In particular, \( W_n(x(t)) \to 0 \) as \( t \to +\infty \).

Finally, since \( W_n \to W \) pointwise and uniformly on compact sets, we conclude that \( W(x(t)) \to 0 \) as \( t \to +\infty \). Equivalently,
\[
\lim_{t\to+\infty} \mathbf{d}(x(t); W^{-1}(0)) = 0.
\]
If additionally each point of \( W^{-1}(0) \) is Lyapunov stable, standard arguments imply that \( W^{-1}(0) \) is $\mathbf{PAS}$.
\subsection{Proofs of Proposition~\ref{ConvE} and Theorem~\ref{SS}}
\emph{Proof of Proposition~\ref{ConvE}.}
The proof is straightforward, relying on the fact that $\mathcal{E}$ is an
invariant set containing the $\omega$-limit set and each point of the set is
Lyapunov stable.
\hfill$\square$

\medskip
\noindent
\emph{Proof of Theorem~\ref{SS}.}
From Theorem \ref{ConvW}, we can deduce the existence of a solution of \eqref{eq:prob} that converges to $W^{-1}(0)$. Now, since every point in $W^{-1}(0)$ is Lyapunov stable, we can deduce that $W^{-1}(0) \subseteq \mathcal{E}$ and thus $\mathcal{E} = W^{-1}(0)$. Using Proposition \ref{ConvE}, we can easily prove that $W^{-1}(0)$ is $\mathbf{SS}$.
\section{Applications}
\label{application}
We now illustrate the applicability of the developed stability framework through two types of dynamical systems: smooth inertial systems involving second-order dynamics with Hessian-driven damping, and nonsmooth differential inclusions arising in unilateral mechanics. In each case, we verify that the assumptions of the main stability results are satisfied, and we characterize the asymptotic behavior of trajectories.
\begin{example}[Smooth inertial system with Hessian damping]
\label{exattouch}
In this example, we apply the results developed in the previous sections to a second-order inertial Newton-like system, as studied in \cite{Attouch} and further analyzed in \cite{dao2023locating}.

Let $\Phi : \R^n \to \R$ be a twice continuously differentiable function, bounded from below, whose Hessian $\nabla^2 \Phi$ is Lipschitz continuous on bounded subsets of $\R^n$. Consider the second-order dynamical system
\begin{equation}
\label{DIN}
\ddot{x}(t) + \alpha \dot{x}(t) + \nabla \Phi(x(t)) + \beta \nabla^2 \Phi(x(t)) \dot{x}(t) = 0,
\end{equation}
where $\alpha > 0$ and $\beta > 0$ are fixed parameters.
It is shown in \cite[Example 4.7]{dao2023locating} that every solution $x(\cdot)$ of \eqref{DIN} satisfies
\[
\lim_{t \to +\infty} \mathbf{d}(x(t); \mathcal{N}) = 0
\quad \text{and} \quad
\lim_{t \to +\infty} \dot{x}(t) = 0,
\]
where $\mathcal{N} := \{x \in \R^n : \nabla \Phi(x) = 0\}$ is the set of critical points of $\Phi$. In particular, if $\Phi$ is convex, then $\mathcal{N} = \argmin \Phi$.
We reformulate \eqref{DIN} as a first-order system by introducing the state variable $y(t) := (x(t), \dot{x}(t)) \in \R^{2n}$, leading to
\[
\dot{y}(t) \in f(y(t)) - A(y(t)),
\]
where
\[
f(y) := (y_2, 0),
\quad
A(y) := \left(0, \nabla \Phi(y_1) + \beta \nabla^2 \Phi(y_1) y_2\right).
\]
Here, the domain is $C = \R^{2n}$, so the normal cone is trivial.
The operator $A$ is single-valued and continuous. Consequently, for every $y \in \R^{2n}$, we have
\[
\clco A_0(y) = \{ A(y) \}.
\]
Thus, $\clco(A_0)$ is also single-valued and continuous.
Along the bounded trajectories $(x(t), \dot{x}(t))$, the mapping $\clco(A_0)$ is Lipschitz continuous and uniformly bounded. The boundedness of solutions follows from the fact that the modified energy function
\[
V(y(t)) := (\alpha \beta + 1) \Phi(x(t)) + \frac{1}{2} \left\| \dot{x}(t) + \beta \nabla \Phi(x(t)) \right\|^2
\]
is nonincreasing over time and bounded from below, as established in \cite{dao2023locating}. The associated function $W : \R^{2n} \to [0,+\infty)$ is given by
\[
W(y) := \alpha \|\dot{x}\|^2,
\quad \text{for } y = (x, \dot{x}).
\]
Thus, the pair $(V, W)$ satisfies the strict Hamiltonian decrease condition~\eqref{eq:H2}. We now apply our theoretical results. We are considering the set of equilibria
\[
\mathcal{N} = \{x \in \R^n : \nabla \Phi(x) = 0\}.
\]
Moreover, we have $W^{-1}(0) = \mathcal{N}$, and every point of $\mathcal{N}$ is Lyapunov stable. These properties match exactly the assumptions required to apply our main result, Proposition~\ref{ConvW}. Thus, by Proposition~\ref{ConvW}, we conclude that the set $\mathcal{N}$ is pointwise asymptotically stable $(\mathbf{PAS})$  for the system \eqref{DIN}.
Furthermore, by Theorem~\ref{SS}, we deduce that every solution $x(\cdot)$ converges to a point $z \in \mathcal{N}$. In particular, if $\Phi$ is convex, then $\mathcal{N} = \argmin \Phi$, and $\displaystyle\lim_{t \to +\infty} x(t) = z$ for some $z \in \argmin \Phi$.
\end{example}
\begin{example}[Nonsmooth differential inclusion with convex potential]
\label{ex2order}
In this example, we apply our results to a second-order differential inclusion involving a nonsmooth potential, as studied in \cite{saoud15} in the context of unilateral mechanics.

Let $\Phi : \R^n \to \R \cup \{+\infty\}$ be a proper, convex, and lower semicontinuous function. Given an initial condition $(x_0, \dot{x}_0) \in \dom(\Phi) \times \dom(\Phi)$ with $(x_0, \dot{x}_0) = (x(0), \dot{x}(0))$, and fixed parameters $m > 0$, $\alpha > 0$, and $\beta > 0$, consider the second-order differential inclusion
\begin{equation}
\label{secorder}
m \ddot{x}(t) + \alpha \dot{x}(t) + \beta x(t) \in -\partial \Phi(\dot{x}(t)),
\quad \text{for a.e. } t \geq 0.
\end{equation}
Introducing the state variable $y(t) := (x(t), \dot{x}(t)) \in \R^{2n}$, we rewrite \eqref{secorder} as a first-order system:
\[
\dot{y}(t) \in f(y(t)) - A(y(t)),
\]
where
\[
f(y) := 
\begin{bmatrix}
0 & 1 \\
-\frac{\beta}{m} & -\frac{\alpha}{m}
\end{bmatrix}
y,
\quad \text{and} \quad
A(y) :=
\left\{
\begin{bmatrix}
0 \\
\frac{1}{m} \xi
\end{bmatrix}
: \xi \in \partial \Phi(y_2)
\right\}.
\]
On the subset $E$ where $\partial \Phi$ is single-valued (e.g., inside $\operatorname{int}(\dom(\Phi))$), we define the mapping $A_0 : \R^{2n} \to \R^{2n}$ by
\[
A_0(y) := 
\begin{bmatrix}
0 \\
\frac{1}{m} s(y_2)
\end{bmatrix},
\]
where $s$ is a single-valued selection from $\partial \Phi$ on $E$.
Since $\partial \Phi$ is closed and convex, and the normal cone vanishes, we have $A = \operatorname{clco}(A_0)$ on $\R^{2n}$.

We assume that \(A\) is uniformly bounded on bounded subsets. Otherwise, a Lipschitz continuous approximation \(F_k\) and a Lipschitz selection \(\psi_k\) can be introduced as described in Section~\ref{sec:selection}. The set of equilibrium points of \eqref{secorder} is
\[
\mathcal{E} := \left\{ (\bar{y}_1, 0) \in \R^{2n} : \bar{y}_1 \in -\frac{1}{\beta} \partial \Phi(0) \right\},
\]
where $\partial \Phi(0)$ is assumed to be nonempty.
For each $\bar{y}_1 \in \mathcal{N} := -\frac{1}{\beta} \partial \Phi(0)$, consider the Lyapunov function
\[
V(y) := \frac{\beta}{2m} \| y_1 - \bar{y}_1 \|^2 + \frac{1}{2} \| y_2 \|^2.
\]
Along any solution $y(t)$, the derivative satisfies
\[
\dot{V}(y(t)) \leq \max_{v \in A(y(t))} \langle \nabla V(y(t)), f(y(t)) - v \rangle \leq 0,
\]
so that \(V\) is nonincreasing along trajectories. As a result, we obtain
\[
\lim_{t \to +\infty} \mathbf{d}(x(t); \mathcal{N}) = 0
\quad \text{and} \quad
\lim_{t \to +\infty} \dot{x}(t) = 0,
\]
which shows that every point of \(\mathcal{E}\) is Lyapunov stable.
Since the pair \((V, W)\), with \(W \equiv 0\), satisfies the non-strict Hamiltonian decrease condition~\eqref{eq:H1}, and every point of \(\mathcal{E}\) is Lyapunov stable, we apply Theorem~\ref{SS} to conclude that the system \eqref{secorder} is semistable  $(\mathbf{SS})$.

To illustrate, let us consider the case where \(\Phi(x) = \|x\|\) on \(\R^n\).  As shown in Example~\ref{ex:approx}, the mapping \(\clco(A_0)\) coincides with the Clarke subdifferential \(\partial_C \Phi\) and is uniformly bounded on bounded subsets of \(\R^n\). Moreover, a Lipschitz approximation \(F_k\) and a Lipschitz selection \(\psi_k\) have been explicitly constructed.
In this setting, the Lyapunov function \(V\) and the associated function \(W\) satisfy the strict Hamiltonian decrease condition~\eqref{eq:H2}, with
\[
W(y) = \frac{\alpha}{m} \| y_2 \|^2,
\quad \text{for } y = (y_1, y_2).
\]
The set of equilibrium points is
\[
\mathcal{E} = \left\{ (\bar{x}, 0) \in \R^n \times \R^n : \|\beta \bar{x}\| \leq 1 \right\},
\quad \text{with } \mathcal{E} \subseteq W^{-1}(0).
\]
Applying Proposition~\ref{ConvW}, we deduce that \(\mathcal{E}\) is pointwise asymptotically stable $(\mathbf{PAS})$ . Moreover, by Theorem~\ref{SS}, the second-order system \eqref{secorder} is semistable  $(\mathbf{SS})$. Thus, every solution \(x(\cdot)\) satisfies
\[
\lim_{t \to +\infty} \mathbf{d}\left(x(t); \left\{ \bar{x} \in \R^n : \|\beta \bar{x}\| \leq 1 \right\} \right) = 0
\quad \text{and} \quad
\lim_{t \to +\infty} \dot{x}(t) = 0.
\]
This example highlights how the most delicate assumptions of the framework can be verified in a nonsmooth setting. In particular, the uniform boundedness of $\clco A_0$, the construction of Lipschitz outer approximations, and the associated single-valued selections are made explicit for the choice $\Phi(x)=\|x\|$, as detailed in Example~\ref{ex:approx} and~\ref{appendixB}. This shows that the Hamiltonian inequality and the stability conditions can be checked concretely in nontrivial differential inclusions with noncompact equilibrium sets.
\end{example}
\section{Conclusion}
\label{sec:conclusion}
We have developed a structural approach to study the stability properties of differential inclusions governed by  maximally  monotone operators. By decomposing the operator into a convexified single-valued part and a normal cone, and combining this with Lipschitz regularizations and selection techniques, we have established sufficient conditions for pointwise asymptotic stability $(\mathbf{PAS})$  and semistability $(\mathbf{SS})$  without imposing strong assumptions on the system data. Our results extend the scope of Lyapunov analysis to broader classes of nonsmooth systems, highlighting the geometric structure underlying the dynamics. Several examples demonstrate the flexibility and robustness of the proposed framework.
\appendix
\renewcommand{\thesection}{Appendix \Alph{section}}
\section{The Steiner Point and Its Lipschitz Selection Property}
\label{appendixA}
For nonempty closed and bounded subsets \(A,B \subseteq \R^n\), the Hausdorff--Pompeiu distance is defined by
\[
\mathbf{d}_H(A,B)
:= \max\bigl\{ \sup_{u \in A} \mathbf{d}(u,B), \; \sup_{v \in B} \mathbf{d}(v,A) \bigr\},
\]
where \(\mathbf{d}(x,B) := \inf_{y \in B} \|x-y\|\) denotes the Euclidean distance. Let \(C \subset \mathbb{R}^n\) be a nonempty subset. The \emph{support function} of \(C\) is defined by
\[
\sigma_C(p) := \sup_{x \in C} \langle p, x \rangle,
\qquad p \in \mathbb{R}^n,
\]
with values in \((-\infty,+\infty]\).
If we assume that \(C\) is closed and convex, then  \(\sigma_C\) is a proper convex function. Moreover, if \(C\) is compact, then  for any \(p \in \mathbb{R}^n\)  the supremum is attained  and  the  convex subdifferential $\partial \sigma_C(p)$ of $ \sigma_C$ at $p$  is a nonempty compact convex subset of  $\mathbb{R}^n$  which satisfies:
\[
\partial \sigma_C(p)
= \operatorname*{arg\,max}_{x \in C} \langle p, x \rangle
= \{\, x \in C \mid \langle p, x \rangle = \sigma_C(p) \,\}.
\]
Consequently, \(\partial \sigma_C(p)\) admits a unique element of minimal norm, which we denote by
\[
(\partial \sigma_C(p))^\circ
:= \operatorname{proj}_{\partial \sigma_C(p)}(0).
\]
\begin{theorem}[{\cite[Theorem~9.4.1]{AubinFrankowska}}]
\label{thm:steiner-aubin}
Let \(\mathcal{K}\) denote the family of all nonempty convex compact subsets of \(\R^n\).
The \emph{Steiner point} of \(C \in \mathcal{K}\) is defined by
\[
s(C)
:= \frac{1}{\vol(\B)} \int_{\B} (\partial \sigma_C (p))^\circ \, dp,
\]
where \(\B \subseteq \R^n\) is the unit ball and \(\vol(\B)\) denotes its Lebesgue measure.
Then the following properties hold:
\begin{enumerate}
\item \(s(C) \in C\) for all \(C \in \mathcal{K}\);
\item Moreover, the mapping \(s : \mathcal{K} \to \R^n\) is Lipschitz continuous with respect
to the Hausdorff--Pompeiu distance: there exists a constant \(L>0\), depending only on the dimension \(n\),  such that
\[
\| s(C_1) - s(C_2) \|
\le L\, \mathbf{d}_H(C_1,C_2)
\qquad \forall\, C_1, C_2 \in \mathcal{K}.
\]
\end{enumerate}
\end{theorem}
For further information on the Steiner selection point and its properties, we refer the reader to \cite{Dentcheva1998,SaintPierre1985}.
\begin{lemma}[Steiner selection for Hausdorff--Lipschitz multifunctions]
\label{lem:steiner-selection}
Let \(G:\mathbb{R}^n \rightrightarrows \mathbb{R}^n\) be a set-valued mapping with nonempty, convex and compact values. Assume that \(G\) is locally Lipschitz continuous around \(\bar x \in \mathbb{R}^n\) with respect to the Hausdorff--Pompeiu distance, that is, there exist \(\ell>0\) and a neighborhood \(U\) of \(\bar x\) such that
\[
\mathbf{d}_H\bigl(G(x),G(x')\bigr)\le \ell \|x-x'\|
\qquad \forall\, x,x'\in U.
\]
Define the Steiner selection \(\psi:U\to\mathbb{R}^n\) by
\[
\psi(x):=s(G(x)), \qquad x\in U.
\]
Then \(\psi\) is Lipschitz continuous on \(U\). More precisely, there exists a constant
\(L>0\) such that
\[
\|\psi(x)-\psi(x')\|
\le L\,\|x-x'\|
\qquad \forall\, x,x'\in U,
\]
where \(L>0\) depends only on the dimension \(n\) and the Hausdorff--Lipschitz constant
\(\ell\) of \(G\).
\end{lemma}
\begin{proof}
Let \(x,x'\in U\). By Theorem~\ref{thm:steiner-aubin}, there exists a constant \(L_n>0\), depending only on the dimension \(n\), such that
\[
\|s(G(x))-s(G(x'))\|\le L_n\, \mathbf{d}_H(G(x),G(x')).
\]
Using the Hausdorff--Lipschitz continuity of \(G\) on \(U\), we obtain
\[
\|\psi(x)-\psi(x')\|
= \|s(G(x))-s(G(x'))\|
\le L_n\, \mathbf{d}_H(G(x),G(x'))
\le L_n\,\ell\,\|x-x'\|.
\]
Setting \(L:=L_n\,\ell\) yields
\[
\|\psi(x)-\psi(x')\|\le L\,\|x-x'\|
\qquad \forall\, x,x'\in U,
\]
which proves that \(\psi\) is Lipschitz continuous on \(U\).
\end{proof}
\begin{example}[Steiner point of the unit ball]
\label{ex:steiner-ball}
Let $\B \subset \R^n$ denote the closed unit ball.
We compute explicitly the Steiner point $s(\B)$ using the representation
given in Theorem~\ref{thm:steiner-aubin}. Recall that the support function of $\B$ is
\[
\sigma_{\B}(p) = \sup_{\|x\|\le 1} \langle p,x\rangle = \|p\|,
\qquad p\in\R^n.
\]
For $p\neq 0$, the subdifferential of $\sigma(\B,\cdot)$ at $p$ is the singleton
\[
\partial \sigma_{\B}(p) = \left\{ \frac{p}{\|p\|} \right\},
\]
and therefore
\[
(\partial \sigma_{\B}(p))^\circ = \frac{p}{\|p\|}.
\]
At $p=0$, one has $\partial \sigma(\B,0)=\B$, whose element of minimal norm is $0$. Using the definition of the Steiner point, we obtain
\[
s(\B)
= \frac{1}{\vol(\B)} \int_{\B} (\partial \sigma_{\B}(p))^\circ \, dp
= \frac{1}{\vol(\B)} \int_{\B} \frac{p}{\|p\|}\, dp.
\]
The mapping $p \mapsto p/\|p\|$ is odd, and the unit ball $\B$ is symmetric with respect to the origin. Hence the integral vanishes, yielding
\[
s(\B)=0.
\]
This example illustrates how the Steiner point selects the center of symmetry for centrally symmetric convex bodies, in contrast with projection-based selections which are defined via minimal-norm criteria.
\end{example}

\section{Detailed Analysis of the Lipschitz Approximation in Example~\ref{ex:approx}}\label{appendixB}
Let \( C \subseteq \R^n \) be a nonempty closed convex set, and consider the distance function
\[
\varphi(x) := \mathbf{d}(x, C) = \inf_{y \in C} \|x - y\|.
\]
This function is convex and globally Lipschitz with constant 1. Its Clarke subdifferential is given by
\[
\partial_C \varphi(x) =
\begin{cases}
\left\{ u(x) \right\}, & \text{if } x \notin C, \\[5pt]
N_C(x) \cap \B, & \text{if } x \in C,
\end{cases}
\]
where \( v(x) := x - \proj_C(x)\) and \(u(x) := \dfrac{v(x)}{\|v(x)\|}\in\mathbb{S}^{n-1}\).

Our goal is to approximate this subdifferential with a family of Lipschitz continuous, convex-valued, and uniformly bounded set-valued mappings 
\( F_k : \R^n \rightrightarrows \R^n \), indexed by a parameter \( \delta_k := 1/k \).

We construct the approximation \( F_k : \R^n \rightrightarrows \R^n \) as:
\[
F_k(x) :=
\begin{cases}
\left\{ u(x) \right\}, & \text{if } \|v(x)\| \ge \delta_k, \\[8pt]
\left(1 - \alpha(x) \right)\B + \alpha(x) \left\{ u(x) \right\}, & \text{if } 0 < \|v(x)\| < \delta_k, \\[8pt]
N_C(x) \cap \B, & \text{if } x \in C.
\end{cases}
\]
with \(\alpha(x) := \dfrac{\|v(x)\|}{\delta_k} \in (0,1)\).

We show that \( F_k(x) \subseteq \B \) for all \( x \in \R^n \). Consider the cases:
\begin{itemize}
    \item If \( \|v(x)\| \ge \delta_k \): then \( F_k(x) = \{ u(x) \} \), a unit vector on the sphere \( \partial \B \), so \( F_k(x) \subseteq \B \).
    \item If \( 0 < \|v(x)\| < \delta_k \): the set \( F_k(x) \) is a convex combination of \( \B \) and a point on \( \partial \B \). Since both are in \( \B \), so is the combination.
    \item If \( x \in C \): then \( v(x) = 0 \), and \( F_k(x) = N_C(x) \cap \B \subseteq \B \) by definition.
\end{itemize}
Thus, \( F_k(x) \subseteq \B \) for all \( x \), proving uniform boundedness. We show that \( x \mapsto F_k(x) \) is Lipschitz continuous under the Hausdorff-Pompeiu distance. Since \( \proj_C \) is non-expansive, we have:
\[
\|v(x) - v(y)\| \le \|x - y\| + \|\proj_C(x) - \proj_C(y)\| \le 2 \|x - y\|.
\]
So \( v(x) \) is Lipschitz with constant 2. Consider now the different regions.
\begin{itemize}
    \item \textbf{Case: \(\|v(x)\|, \|v(y)\| \ge \delta_k\).}

In this region, both \(F_k(x)\) and \(F_k(y)\) are singleton sets:
\[
F_k(x) = \left\{ u(x) \right\}, \quad F_k(y) = \left\{ u(y) \right\},
\]
Since these are singleton sets, the Hausdorff-Pompeiu distance between \(F_k(x)\) and \(F_k(y)\) reduces to the Euclidean distance between the points:
\[
\mathbf{d}_H(F_k(x), F_k(y)) = \|u(x) - u(y)\|.
\]
To show Lipschitz continuity, we analyze the mapping \(x \mapsto u(x)\). First, recall that the projection onto a closed convex set is non-expansive, i.e.,
\[
\|\proj_C(x) - \proj_C(y)\| \le \|x - y\|.
\]
Thus, the mapping \(x \mapsto v(x) := x - \proj_C(x)\) is Lipschitz with constant 2:
\[
\|v(x) - v(y)\| \le \|x - y\| + \|\proj_C(x) - \proj_C(y)\| \le 2\|x - y\|.
\]
Now consider the mapping \(v \mapsto \dfrac{v}{\|v\|}\), which is smooth on the set \(\{v \in \R^n : \|v\| \ge \delta_k\}\). On this domain, it is Lipschitz continuous with constant depending on \(\delta_k\). Specifically, for any vectors \(v, w \in \R^n\) such that \(\|v\|, \|w\| \ge \delta_k\), we have
\[
\begin{aligned}
\left\| \frac{v}{\|v\|} - \frac{w}{\|w\|} \right\|
&= \left\| \frac{v - w}{\|v\|} + w \left( \frac{1}{\|v\|} - \frac{1}{\|w\|} \right) \right\| \\[5pt]
&\le \left\| \frac{v - w}{\|v\|} \right\| + \left\| w \left( \frac{1}{\|v\|} - \frac{1}{\|w\|} \right) \right\| \\[5pt]
&\le \frac{\|v - w\|}{\|v\|} + \frac{\|w\|}{\|v\| \|w\|} \, \left| \|w\| - \|v\| \right| \\[5pt]
&\le \frac{\|v - w\|}{\delta_k} + \frac{1}{\delta_k} \, \|v - w\| \\[5pt]
&= \frac{2}{\delta_k} \|v - w\|.
\end{aligned}
\]
Applying this to \(v(x)\) and \(v(y)\), we obtain:
\[
\|u(x) - u(y)\| = \left\| \frac{v(x)}{\|v(x)\|} - \frac{v(y)}{\|v(y)\|} \right\| \le \frac{2}{\delta_k} \|v(x) - v(y)\| \le \frac{4}{\delta_k} \|x - y\|.
\]
Therefore,
\[
\mathbf{d}_H(F_k(x), F_k(y)) \le \frac{4}{\delta_k} \|x - y\|,
\]
so the mapping \(x \mapsto F_k(x)\) is Lipschitz continuous in this region.

\item \textbf{Case: \( 0 < \|v(x)\|, \|v(y)\| < \delta_k \).}

In this region, both \( F_k(x) \) and \( F_k(y) \) are closed balls with center \( \alpha(x) u(x) \) and radius \( 1 - \alpha(x) \).  Note that both \( \alpha(x) \) and \( u(x) \) are Lipschitz continuous functions on this region, Since the mapping \(x \mapsto v(x)\) is Lipschitz continuous, the above estimates
are performed on regions where \(\|v(x)\|\) is uniformly controlled. Then, for some constants \( L_\alpha, L_u > 0 \), we have:
\[
|\alpha(x) - \alpha(y)| \le L_\alpha \|x - y\|, \quad \text{and} \quad \|u(x) - u(y)\| \le L_u \|x - y\|.
\]
The distance between the centers of the two balls satisfies:
\begin{align*}
\|\alpha(x) u(x) - \alpha(y) u(y)\| 
&\le |\alpha(x) - \alpha(y)| \cdot \|u(x)\| + \alpha(y) \cdot \|u(x) - u(y)\| \\
&\le |\alpha(x) - \alpha(y)| + \|u(x) - u(y)\| \\
&\le (L_\alpha + L_u) \|x - y\|.
\end{align*}
The difference between the radii is:
\[
|1 - \alpha(x) - (1 - \alpha(y))| = |\alpha(x) - \alpha(y)| \le L_\alpha \|x - y\|.
\]
Hence, the Hausdorff-Pompeiu distance between \( F_k(x) \) and \( F_k(y) \) is bounded above by the sum of these two quantities:
\[
\mathbf{d}_H(F_k(x), F_k(y)) \le (2L_\alpha + L_u) \|x - y\|.
\]
This shows that \( F_k \) is Lipschitz continuous under the Hausdorff-Pompeiu distance in this region.
\end{itemize}
We now analyze the behavior of \( F_k(x) \) near the transition zones and prove that it varies continuously in the Hausdorff-Pompeiu sense.
\begin{itemize}
    \item As \( \|v(x)\| \to \delta_k^{-} \):
Suppose \( \|v(x)\| < \delta_k \) but close to \( \delta_k \). Then \( F_k(x) \) is given by:
\[
F_k(x) = (1 - \alpha(x)) \B + \alpha(x) \left\{ u(x) \right\}.
\]
As \( \|v(x)\| \to \delta_k^{-} \), we have \( \alpha(x) \to 1 \). Consequently:
\begin{itemize}
    \item The weight on \( \B \), namely \( 1 - \alpha(x) \), tends to 0.
    \item The weight on the unit vector tends to 1.
\end{itemize}
Therefore, the convex combination \( F_k(x) \) collapses to:
\[
F_k(x) \to \left\{ u(x) \right\}.
\]
This matches the definition of \( F_k(x) \) for \( \|v(x)\| \ge \delta_k \), ensuring continuity at the threshold \( \|v(x)\| = \delta_k \).
\item  As \( \|v(x)\| \to 0 \) (i.e., \( x \to C \)):
Within the region \( 0 < \|v(x)\| < \delta_k \), the expression is again:
\[
F_k(x) = (1 - \alpha(x)) \B + \alpha(x) \left\{ u(x) \right\}. 
\]
As \( \|v(x)\| \to 0 \), we get \( \alpha(x) \to 0 \). Hence:
\begin{itemize}
    \item The center of the ball, \( \alpha(x) \cdot u(x) \), tends to the origin.
    \item The radius \( 1 - \alpha(x) \to 1 \).
\end{itemize}
Thus, the ball tends to the unit ball:
\[
F_k(x) \to \B.
\]
\item  At \( x \in C \):
We define:
\[
F_k(x) := N_C(x) \cap \B.
\]
The normal cone mapping \( x \mapsto N_C(x) \) is outer semicontinuous, and \( \B \) is compact. Hence, the composition \( x \mapsto F_k(x) \) is outer semicontinuous. Therefore:
\[
\limsup_{x \to x_0 \in C} F_k(x) \subseteq F_k(x_0),
\]
so no discontinuity occurs at the boundary \( x \in C \).
\end{itemize}
Finally, we study the convergence to \( \partial_C \varphi \). If \( x \notin C \), then for large \( k \),  \( \|v(x)\| \ge \delta_k \), so
\[
F_k(x) = \left\{ u(x) \right\} = \partial_C \varphi(x).
\]
If \( x \in C \), then \( v(x) = 0 \), so
\[
F_k(x) = N_C(x) \cap \B = \partial_C \varphi(x).
\]
If \( x \to x_0 \in C \), then \( \|v(x)\| \to 0 \), so \( \alpha(x) \to 0 \), so
\[
F_k(x) = (1 - \alpha(x))\B + \alpha(x) \left\{ u(x) \right\} \to \B.
\]
and by outer semicontinuity of the normal cone 
\[
\limsup_{x \to x_0} F_k(x) \subseteq N_C(x_0) \cap \B = \partial_C \varphi(x_0).
\]
which proves that \( F_k \to \partial_C \varphi \) graphically.

We now define the selection associated with the approximation \(F_k\) via
the Steiner point:
\[
\psi_k(x) := s\bigl(F_k(x)\bigr), \qquad x \in \R^n,
\]
where \(s\) denotes the Steiner point mapping recalled in~\ref{appendixA}.
To compute the Steiner selection \(\psi_k(x)=s(F_k(x))\), we distinguish the
three regions in the definition of \(F_k(x)\).

\textbf{Case 1: \(\|v(x)\|\ge \delta_k\).}
In this region,
\[
F_k(x)=\{u(x)\}.
\]
Since the Steiner point satisfies \(s(\{a\})=a\) for every \(a\in\R^n\),
we obtain
\[
\psi_k(x)=s(F_k(x))=s(\{u(x)\})=u(x).
\]

\textbf{Case 2: \(0<\|v(x)\|<\delta_k\).}
Here,
\[
F_k(x)=(1-\alpha(x))\B+\alpha(x)\{u(x)\}
=\alpha(x)u(x)+(1-\alpha(x))\B.
\]
Using the translation covariance and positive homogeneity of the Steiner
point,
\[
s(a+K)=a+s(K),
\qquad
s(\lambda K)=\lambda s(K)\quad(\lambda\ge 0),
\]
we get
\[
\psi_k(x)=s(F_k(x))
=s\bigl(\alpha(x)u(x)+(1-\alpha(x))\B\bigr)
=\alpha(x)u(x)+(1-\alpha(x))\,s(\B).
\]
Finally, since \(\B\) is centrally symmetric, \(s(\B)=0\) (see Example~\ref{ex:steiner-ball}), hence
\[
\psi_k(x)=\alpha(x)u(x).
\]

\textbf{Case 3: \(x\in C\).}
Then
\[
F_k(x)=N_C(x)\cap \B,
\qquad\text{and}\qquad
\psi_k(x)=s\bigl(N_C(x)\cap \B\bigr).
\]
In general, there is no closed-form expression for \(s(N_C(x)\cap\B)\) without additional structure on \(C\) (or on the cone \(N_C(x)\)). In particular, one cannot expect \(\psi_k(x)=0\) on \(C\) in general.
Altogether, the Steiner selection admits the piecewise representation
\[
\psi_k(x)=s(F_k(x))=
\begin{cases}
u(x), & \text{if } \|v(x)\|\ge \delta_k,\\[6pt]
\alpha(x)\,u(x), & \text{if } 0<\|v(x)\|<\delta_k,\\[6pt]
s\bigl(N_C(x)\cap \B\bigr), & \text{if } x\in C.
\end{cases}
\]
This function \( \psi_k \) is globally defined, Lipschitz continuous, and uniformly bounded by 1. We split the analysis into three cases: 
\begin{itemize}
    \item \textbf{Case 1: \(\|v(x)\| \ge \delta_k\) and \(\|v(y)\| \ge \delta_k\).} We have
\[
\begin{aligned}
\left\| \psi_k(x) - \psi_k(y) \right\|
&= \left\|u(x) - u(y) \right\|\\[5pt]
&= \left\| \frac{v(x)}{\|v(x)\|} - \frac{v(y)}{\|v(y)\|} \right\| \\[5pt]
&= \left\| \frac{v(x)}{\|v(x)\|} - \frac{v(y)}{\|v(x)\|} + \frac{v(y)}{\|v(x)\|} - \frac{v(y)}{\|v(y)\|} \right\| \\[5pt]
&\le \left\| \frac{v(x) - v(y)}{\|v(x)\|} \right\| + \left\| v(y) \left( \frac{1}{\|v(x)\|} - \frac{1}{\|v(y)\|} \right) \right\| \\[5pt]
&= \frac{\|v(x) - v(y)\|}{\|v(x)\|} + \|v(y)\| \cdot \left| \frac{1}{\|v(x)\|} - \frac{1}{\|v(y)\|} \right| \\[5pt]
&\le \frac{\|v(x) - v(y)\|}{\delta_k} + \frac{1}{\delta_k} \left| \|v(x)\| - \|v(y)\| \right| \\[5pt]
&\le \frac{2}{\delta_k} \|v(x) - v(y)\| \\[5pt]
&\le \frac{4}{\delta_k} \|x - y\|.
\end{aligned}
\]
\item \textbf{Case 2: \(0 < \|v(x)\| < \delta_k\) and \(0 < \|v(y)\| < \delta_k\).}
We have \(\psi_k(x)=\alpha(x)u(x)\) with
\(\alpha(x)=\|v(x)\|/\delta_k\).
Since \(v(x)\neq 0\) on this region, we observe that
\[
\psi_k(x)=\alpha(x)u(x)
=\frac{\|v(x)\|}{\delta_k}\,\frac{v(x)}{\|v(x)\|}
=\frac{1}{\delta_k}v(x),
\]
and similarly \(\psi_k(y)=\dfrac{1}{\delta_k}v(y)\).
Therefore,
\[
\begin{aligned}
\left\| \psi_k(x) - \psi_k(y) \right\|
&= \frac{1}{\delta_k}\,\|v(x)-v(y)\| \\[5pt]
&\le \frac{2}{\delta_k}\,\|x-y\|.
\end{aligned}
\]
\item \textbf{Case 3: If $x \in C$}. Since \(F_k(x)\subseteq \B\) for all \(x\in\R^n\) and the Steiner point is a
selector, we have
\[
\psi_k(x)=s\bigl(F_k(x)\bigr)\in F_k(x)\subseteq \B,
\]
and therefore
\[
\|\psi_k(x)\|\le 1
\qquad \forall\, x\in\R^n.
\]
The Lipschitz continuity of \(\psi_k\) follows from the Lipschitz continuity of \(F_k\) in the Hausdorff distance and that of the Steiner point; see~\ref{appendixA}.
\end{itemize}
\begin{remark}
In the present example, one may also define the selection
\[
\psi_k(x):=\proj_{F_k(x)}(0)
\]
and obtain a globally Lipschitz continuous mapping. This relies on the special geometry of the sets \(F_k(x)\), which yields the explicit representation
\[
\psi_k(x) =
\begin{cases}
u(x), & \text{if } \|v(x)\| \ge \delta_k, \\[8pt]
\beta(x)\,u(x), & \text{if } 0 < \|v(x)\| < \delta_k, \\[8pt]
0, & \text{if } x \in C,
\end{cases}
\qquad
\text{where }
\beta(x):=\dfrac{2\|v(x)\|}{\delta_k}-1.
\]
Indeed, away from \(C\), the sets \(F_k(x)\) are either singletons or Euclidean balls whose centers and radii depend Lipschitz continuously on \(x\), for which the projection of the origin admits an explicit and Lipschitz formula. Moreover, for \(x\in C\), the origin belongs to \(F_k(x)=N_C(x)\cap\B\), so the projection is identically zero.

This behavior is specific to this construction and should not be expected in general. Without additional geometric assumptions such as uniform strong convexity of the values \(F_k(x)\), the projection-based selection \(x\mapsto\proj_{F_k(x)}(0)\) is, in general, not Lipschitz continuous and is typically only Hölder continuous of order \(1/2\), (see~\cite{Thibaultbook}). For this reason, in the general framework developed in this paper, we rely on the Steiner selection, which is Lipschitz continuous with respect to the Hausdorff distance and does not require additional curvature assumptions on the values of \(F_k\).
\end{remark}



\bibliographystyle{plain}
\bibliography{refs}

\end{document}